
\documentclass{amsart}

\usepackage{datetime}
\usepackage{amsfonts}
\usepackage{amsmath}
\usepackage{amssymb}
\usepackage{amsthm}
\usepackage{eucal}
\usepackage{graphicx,color}
\usepackage{txfonts}
\usepackage[usenames,dvipsnames,svgnames,table]{xcolor}

\usepackage{hyperref}
\hypersetup{colorlinks,
            filecolor=black,
            linkcolor=MidnightBlue,
            citecolor=NavyBlue,
            urlcolor=RoyalBlue,
            bookmarksopen=true}

\theoremstyle{plain}
\newtheorem{lemma}{Lemma}

\newtheorem{theorem}{Theorem}
\newtheorem*{main}{Main Theorem}

\theoremstyle{remark}

\newtheorem{remark}{Remark}


\newcommand{\dn}{\check}
\newcommand{\Df}{\mathrm{DIFF}}

\newcommand{\Dt}{\frac{\partial}{\partial t}}

\newcommand{\ls}{\lesssim}
\newcommand{\mb}{\mathbb}
\newcommand{\mc}{\mathcal}

\newcommand{\mr}{\mathrm}
\newcommand{\rg}{\sigma}

\newcommand{\ve}{\varepsilon}
\newcommand{\vp}{\varphi}

\DeclareMathOperator{\Rm}{Rm}
\DeclareMathOperator{\Rc}{Rc}

\newcommand{\R}{\mathbb{R}}
\newcommand{\bS}{\mathbb{S}}

\newcommand{\Z}{\mathbb{Z}}

\newcommand{\sbs}{\subset}
\newcommand{\ra}{\rightarrow}

\newcommand{\met}{{\mr{MET}}}

\newcommand{\mq}{{\mr{MET}}^{1/4<\mr{sec}\leq1}}

\newcommand{\cl}[1]{{\mathcal{#1}}}
\newcommand{\0}[1]{_{_{#1}}}

\newcounter{mnotecount}[section]
\let\oldmarginpar\marginpar
\setlength{\marginparwidth}{0.8in}
\renewcommand\marginpar[1]{\-\oldmarginpar[\raggedleft\footnotesize #1]
{\raggedright\footnotesize #1}}

\begin{document}

\title{Sphere Bundles with $1/4$-Pinched Fiberwise Metrics}

\author{Thomas Farrell}
\address[Thomas Farrell]{Tsinghua University}
\email{farrell@math.tsinghua.edu.cn}

\author{Zhou Gang}
\address[Zhou Gang]{California Institute of Technology}
\email{gzhou@caltech.edu}

\author{Dan Knopf}
\address[Dan Knopf]{University of Texas at Austin}
\email{danknopf@math.utexas.edu}
\urladdr{www.ma.utexas.edu/users/danknopf}

\author{Pedro Ontaneda}
\address[Pedro Ontaneda]{Binghamton University}
\email{pedro@math.binghamton.edu}
\urladdr{www.math.binghamton.edu/pedro}

\thanks{TF and PO thank NSF for support in DMS-1206622.
ZG thanks NSF for support in DMS-1308985 and DMS-1443225.
DK thanks NSF for support in DMS-1205270.}

\begin{abstract}
We prove that all smooth sphere bundles that admit fiberwise $1/4$-pinched
metrics are induced bundles of vector bundles,
so their structure groups reduce from $\Df(\bS^n)$ to $\mr O(n+1)$.
This result implies the existence of many smooth
$\bS^n$-bundles over $\bS^k$ that do not support strictly $1/4$-pinched positively
curved Riemannian metrics on their fibers.
\end{abstract}

\maketitle
\setcounter{tocdepth}{1}
\tableofcontents

\section*{Introduction and statement of results}
Let $M$ be a smooth manifold. By a {\it smooth $M$-bundle
over a space $X$}, we mean a locally trivial bundle over $X$
whose structural group is $\Df(M)$. Here $\Df(M)$ is
the group of self-diffeomorphisms of $M$, 
with the smooth topology.

In this paper, we consider smooth sphere
bundles that can be equipped with a fiberwise
$1/4$-pinched metric, that is, a smoothly
varying Riemannian metric
on each fiber with sectional curvatures in the interval
$(1/4,1]$. The purpose of this work is to call attention to
the fact that such bundles are precisely $\mr O(n)$-bundles;
hence they are the induced bundles of vector bundles.

\begin{main}
For $n\geq3$, let $E\ra X$ be a smooth $n$-sphere
bundle over the locally compact space $X$. 
If $E$ can be equipped with a $1/4$-pinched fiberwise metric,
then $E$ equivalent to an $\mr O(n+1)$ $n$-sphere bundle over $X$.
\end{main}

Hence if the fibers of $E$ admit $1/4$-pinched metrics, then
the structural group of $E$ can be reduced from $\Df(\bS^n)$
to $\mr O(n+1)$.

\begin{remark}
Due to work of others, the assumption $n\geq3$ can be dropped from the
Main Theorem, because $\mr O(n+1)$ is a deformation
retract of $\Df(\bS^n)$ when $n=0,1,2,3$. This is due to Smale \cite{Smale}
for $n=2$, and to Hatcher \cite{Hatcher} for $n=3$.
\end{remark}

The proof of the theorem consists of two steps.
First we evolve the metrics on the fibers by normalized Ricci flow (\textsc{nrf})
to obtain fiberwise round metrics. Note that we cannot use standard
results here, because we need uniform control on how far
each metric moves in the orbit of the diffeomorphism group.
Instead, we adapt methods developed in \cite{GKS, GK}
for controlling coordinate parameterizations of mean curvature
flow solutions. Then we use a fiberwise
version of the fact that each round metric is isometric
to the canonical round metric on the sphere.
We explain these steps below in more detail, starting
with the second.

\begin{remark}
Suppose $n$ is an odd integer, $k$ is divisible by $4$, and $n\gg k$ (it suffices that $n\geq 3k+4$).
Then it is known \cite{FH78} that $\pi_{k-1}(\Df(\bS^n))=\Z\oplus\Z\oplus\{\mbox{ finite }\}$, and 
$\pi_{k-1}\mr O(n+1)=\Z$. Consequently, the Main Theorem
implies the existence of many smooth $\bS^n$-bundles over $\bS^k$ that do not support
strictly $1/4$-pinched positively curved Riemannian metrics on their fibers. Further examples
result from Theorem~3$^{\prime\prime\prime}$ (p.~59) of \cite{FO}. Namely, for every pair
of positive integers $k$ and $n$, with $n\equiv (3-k)\mod 4$
and $n\geq n_1(k)$, where $n_1(k)$ is an integer depending on $k$, there exists such a smooth bundle.
\end{remark}

\part{Topology}

\section{Notations and definitions}

Let $E_1\ra X$ and $E_2\ra X$ be two smooth $M$-bundles
over the space $X$. Let $f:E_1\ra E_2$ be a {\it fiberwise map}, 
meaning that $f$ sends the fiber in $E_1$ over each $x\in X$ diffeomorphically
onto the fiber in $E_2$ over $x$. The expression of $f$ in trivializing charts over an open
set $U\sbs X$ has the form $(x,p)\mapsto (x,f(x,p))$ for all $(x,p)\in U\times M$.
This determines a map $U\ra \Df(M)$.

We say that the bundles $E_1$, $E_2$ are {\it smoothly
equivalent} (or simply {\it equivalent}) if there is a fiberwise
map $f:E_1\ra E_2$ such that all induced
maps $U\ra \Df(M)$ are continuous. The map $f$ is a
{\it (smooth) bundle equivalence}.

The space of Riemannian metrics on the smooth manifold
$M$, with the smooth topology, will be denoted by $\met(M)$.
Let $E\ra X$ be a smooth $M$-bundle
over the space $X$. We write $\pi:E\ra X$ for the bundle
projection. A {\it (smooth) fiberwise metric on $E$} is a collection
$\{g_x\}_{x\in X}$ of Riemannian metrics on the fibers of $E$ that
varies smoothly on $X$. By this we mean that $g_x$ is a Riemannian
metric on the fiber $E_x=\pi^{-1}(\{x\})$, and
if $\alpha:\pi^{-1}(U)\ra U\times M$ is a trivializing
chart, then the maps $U\ra \met(M)$, $x\mapsto \alpha_*g_x$ are
continuous.

We denote by $\mq(\bS^n)$ the space of Riemannian metrics on
the $n$-sphere $\bS^n$ with sectional curvatures in the interval
$(1/4,1]$, considered with the smooth topology.
Similarly, we denote by $\met^{\mr{round}}(\bS^n)$ the space of 
metrics on $\bS^n$ with all sectional curvatures equal to $1$,
i.e., the space of round metrics, again with the smooth topology.
The canonical round metric on $\bS^n$, that is, the one induced by the inclusion
$\bS^n\sbs\R^{n+1}$, is denoted by $\sigma_n$, or simply $\sigma$.

\section{Ricci flow and families of round metrics}

The following is a ``family version'' of the fact that for every
$g\in\met^{\mr{round}}(\bS^n)$, there is an isometry
$\phi_g:(\bS^n, g)\ra(\bS^n,\sigma)$.

\begin{lemma}	\label{FO1}
There is a continuous map
\[
\Phi:\met^{\mr{round}}(\bS^n)\longrightarrow \Df(\bS^n)
\]
such that $\Phi(g):(\bS^n, g)\ra (\bS^n,\sigma)$ is an isometry.
\end{lemma}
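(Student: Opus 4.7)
My plan is to construct $\Phi$ explicitly using a continuous choice of orthonormal frames at a fixed basepoint together with the exponential map of each metric. Fix once and for all a basepoint $p_0 \in \bS^n$ and a $\sigma$-orthonormal basis $(e_1, \ldots, e_n)$ of $T_{p_0} \bS^n$. Given $g \in \met^{\mr{round}}(\bS^n)$, apply Gram--Schmidt to $(e_1, \ldots, e_n)$ with respect to the inner product $g|_{p_0}$ to obtain a $g$-orthonormal basis $(e_1^g, \ldots, e_n^g)$; since Gram--Schmidt is a smooth function of the inner product, the map $g \mapsto (e_i^g)$ is continuous in the smooth topology on $\met^{\mr{round}}(\bS^n)$. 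Let $L(g) \colon T_{p_0}\bS^n \to T_{p_0}\bS^n$ be the unique linear map sending $e_i^g$ to $e_i$; it is a linear isometry from $(T_{p_0}\bS^n, g|_{p_0})$ to $(T_{p_0}\bS^n, \sigma|_{p_0})$ that depends continuously on $g$.

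Next I would define $\Phi(g) \colon \bS^n \to \bS^n$ by
\[
\Phi(g)(p) \;=\; \exp^\sigma_{p_0}\bigl( L(g) \cdot v \bigr),
\]
where $v \in T_{p_0}\bS^n$ is any vector of $g$-length at most $\pi$ satisfying $\exp^g_{p_0}(v) = p$. The formula is well defined because $\exp^g_{p_0}$ restricts to a diffeomorphism from the open $g$-ball of radius $\pi$ onto the complement of the $g$-antipode $-p_0^g$, and because $L(g)$ preserves unit vectors, so every preimage of $-p_0^g$ is mapped to the $\sigma$-antipode $-p_0$. That $\Phi(g)$ is an isometry is the standard space-form argument: since both metrics have constant sectional curvature $1$, their representations in geodesic normal coordinates at $p_0$ coincide under the linear identification $L(g)$, whence $\Phi(g)^*\sigma = g$.

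The remaining and only real difficulty is to verify that $g \mapsto \Phi(g)$ is continuous in the smooth topology on $\Df(\bS^n)$. Away from the moving antipode $-p_0^g$, continuity is immediate from smooth dependence of solutions of the geodesic ODE on initial conditions and parameters. I anticipate that the main obstacle is smoothness across the antipode, where the defining formula's chart breaks down. To resolve this I would use the uniqueness of $\Phi(g)$ as the isometry $(\bS^n, g) \to (\bS^n, \sigma)$ determined by the initial data $(p_0, L(g))$, together with a second coordinate patch: for $g$ near a given $g_0$, choose an auxiliary point $q_0$ avoiding both $-p_0^{g_0}$ and its $g_0$-antipode, compute the induced data $(\Phi(g)(q_0), d\Phi(g)|_{q_0})$ by parallel transport along the $g$-minimizing geodesic from $p_0$ to $q_0$ (which depends continuously on $g$ by smooth ODE dependence), and then express $\Phi(g)$ on a neighborhood of $-p_0^{g_0}$ via $\exp^g_{q_0}$ with this new data. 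Uniqueness forces the two local formulas to agree where both are defined, yielding a continuous assignment $g \mapsto \Phi(g) \in \Df(\bS^n)$ in the smooth topology over all of $\bS^n$.
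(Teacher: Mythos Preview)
Your proposal is correct and follows essentially the same route as the paper: fix a basepoint and a $\sigma$-orthonormal frame there, use Gram--Schmidt to get a continuously varying $g$-orthonormal frame, and transport via the two exponential maps to build the isometry $\Phi(g)$. The paper simply asserts that continuity of $\Phi$ is ``straightforward to check,'' whereas you go further and sketch the antipode/two-chart argument; this extra care is appropriate but does not represent a different strategy.
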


\begin{proof}
Fix $x\in\bS^n$ and let $\cl{B}_\sigma$ be a $\sigma$-orthonormal
basis for $T_x\bS^n$. For any $g\in\met^{\mr{round}}(\bS^n)$,
the Gram--Schmidt process produces
a $g$-orthonormal basis $\cl{B}_g$ of $T_x\bS^n$. Moreover
the map $g\mapsto \cl{B}_g$ is continuous. Now using the
frame $\cl{B}_g$ and the
exponential maps $\exp^\sigma$ and $\exp^g$ in the usual way,
we get an isometry $\phi:(\bS^n, g)\ra (\bS^n,\sigma)$.
We put $\phi=\Phi(g)$. It is straightforward to check that
the map $\Phi$ is continuous.
\end{proof}

Henceforth we write $\phi_g=\Phi(g)$.
\smallskip

Let $g_0$ be a $1/4$-pinched metric on $\bS^n$. Brendle and
Schoen \cite{BS} prove that if $g(t)$ is normalized Ricci flow
(\textsc{nrf}) with initial data $g(0)=g_0$, then $g(t)$ converges in the
smooth topology as $t\rightarrow\infty$ to a round metric $g^\bullet$.

The key fact that we need to prove the Main Theorem is as follows.

\begin{theorem}	\label{Continuous}
Let $X$ be compact, and let $F:X\ra \mq(\bS^n)$ continuous.
Then the map $F^\bullet:X\ra \met^{\mr{sec=1}}(\bS^n)$ induced by \textsc{nrf}
is also continuous, where $F^\bullet(g)=(F(g))^\bullet$.
\end{theorem}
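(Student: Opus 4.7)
The plan is to deduce continuity of the long-time limit map $F^\bullet$ by exhibiting it as a uniform limit on $X$ of finite-time NRF evaluation maps, each of which is continuous. Two ingredients enter: (i) continuous dependence of the Ricci flow on initial data over any fixed finite time interval, and (ii) uniform convergence of the flow as $t \to \infty$ over the compact family $F(X)$.

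First, since $X$ is compact and $F$ is continuous, the image $F(X)$ is a compact subset of $\mq(\bS^n)$. In particular, there is $\delta > 0$ such that every $g \in F(X)$ has sectional curvatures in $[1/4 + \delta, 1]$, together with uniform bounds on all covariant derivatives of the curvature tensor. For any fixed finite $T$, the evaluation map $\Psi_T : x \mapsto [F(x)](T)$, where $[g](T)$ denotes the NRF solution at time $T$ starting from $g$, is continuous in the smooth topology, by the standard continuous dependence of a strictly parabolic flow on its initial data over a bounded time interval. So each $\Psi_T$ is continuous as a map $X \to \met(\bS^n)$.

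The heart of the argument is uniform convergence: I want to show that given $\ve > 0$, there exists $T_\ve < \infty$ with $\|\Psi_T(x) - F^\bullet(x)\|_{C^k} < \ve$ for every $x \in X$ and every $T \geq T_\ve$. For this I plan to invoke the fact that the round metrics form a finite-dimensional attractor for NRF: there is a smooth-topology neighborhood $\mc U$ of $\met^{\mr{round}}(\bS^n)$ and a uniform exponential rate $\lambda > 0$ such that any NRF trajectory entering $\mc U$ remains there and converges to its round limit at rate $\lambda$. This dynamical stability comes from the spectral analysis of the linearized NRF operator at $\sigma$, whose kernel coincides with the tangent space to the diffeomorphism orbit of $\sigma$ and which is strictly negative transverse to that tangent space. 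By Brendle--Schoen, every trajectory originating in $F(X)$ eventually enters $\mc U$, and a standard compactness argument using continuous dependence on initial data over finite times produces a uniform entry time $T_0$ such that $\Psi_t(x) \in \mc U$ for all $x \in X$ and $t \geq T_0$. Stability then yields $\|\Psi_t(x) - F^\bullet(x)\|_{C^k} \leq C e^{-\lambda(t - T_0)}$, which is the required uniform bound. Since $F^\bullet$ is then the uniform limit of the continuous maps $\Psi_T$, it is itself continuous.

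The main obstacle is the uniform exponential stability near the round metrics. Although implicit in the asymptotic analysis behind the Brendle--Schoen convergence theorem, it must be invoked carefully because the attractor is a manifold of round metrics rather than an isolated fixed point; the correct statement pairs pointwise convergence along each individual trajectory with uniform control on the entry time into a fixed neighborhood of that attractor, and it is this uniformity, extracted from compactness of $F(X)$, that does the real work.
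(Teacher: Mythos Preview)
Your reduction to ``uniform limit of continuous finite-time maps'' is the right skeleton, and it is also the skeleton of the paper's argument. The gap is in the flesh you put on step~(ii). The claim that the round metrics form an exponentially attracting set for \textsc{nrf} with a \emph{uniform} rate, and that this follows from ``spectral analysis of the linearized \textsc{nrf} operator at $\sigma$, whose kernel coincides with the tangent space to the diffeomorphism orbit and which is strictly negative transverse to that tangent space,'' is precisely the hard statement, and the spectral picture you invoke is not correct as stated.

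The linearized \textsc{nrf} operator at $\sigma$ is not strictly parabolic: its principal symbol is degenerate along the (infinite-dimensional) diffeomorphism directions, so there is no spectral theory to appeal to directly, and standard stable-manifold or center-manifold theorems do not apply. The usual remedy is the DeTurck trick, which renders the flow strictly parabolic; but the resulting linear operator $L$ (see the paper's Lemma~\ref{LM:spectral}) is \emph{unstable}: it has an $(n{+}1)$-dimensional eigenspace of infinitesimal conformal diffeomorphisms with positive eigenvalue $n-2$. These modes are tangent to $\mc O_\sigma$, so they are invisible to \textsc{nrf} itself, but once you gauge-fix they become genuine growing modes that can push the solution arbitrarily far along the orbit. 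In particular, a naive linearization argument in a fixed DeTurck gauge gives neither a uniform rate nor continuous dependence of the limit. The paper's entire Part~\ref{Analysis} is devoted to overcoming exactly this: it constructs, for each finite time $T$, an ``optimal'' admissible diffeomorphism $\phi_T$ that kills the unstable conformal modes at time $T$, proves by a bootstrap (Lemmas~\ref{LM:endPoint}--\ref{LM:Minfty}) that the resulting solutions decay exponentially in the stable directions, and shows the sequence $\phi_T$ is Cauchy, yielding a limit $\phi_\infty$ and hence a well-defined, continuously varying $g^\bullet$. Your proposal black-boxes all of this as a known stability fact; it is not, and the paper says so explicitly (``standard stability theorems do not function in the presence of an infinite-dimensional center manifold'').
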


We prove Theorem~\ref{Continuous} in Part~\ref{Analysis} below.

\section{Proof of the Main Theorem}

Using Theorem~\ref{Continuous}, we prove our Main Theorem as follows.

\begin{proof}[Proof of Main Theorem]
Let $E\ra X$ be a smooth $\bS^n$-bundle. We write $\pi:E\ra X$ for
the bundle projection, and set $E_x=\pi^{-1}(x)$.
We assume that there is a fiberwise $1/4$-pinched metric $\{g_x\}_{x\in X}$ on $E$.

Let $\{(U_i, \alpha_i)\}_{i\in\mc I}$ be an atlas of trivializing bundle charts of $E$.
Since $X$ is locally compact, we can assume that all $\bar U_i$ are compact.
For the change of charts, we write
$\alpha_{ij}:U_{ij}\times\bS^n\ra U_{ij}\times\bS^n$, $\alpha_{ij}=\alpha_j\circ\alpha_i^{-1}$. Here
$U_{ij}=U_i\cap U_j$. We write $\alpha_{ij}(x,p)=(x,\alpha_{ij}(x)(p))$.
Note that 
\begin{equation}	\label{1}
\alpha_{ij}(x):\big(\,\bS^n\,,\,(\alpha_i)_*g_x\,\big)\,\,\longrightarrow\,\,\big(\,\bS^n\,,\,(\alpha_j)_*g_x\,\big)
\end{equation}
is an isometry.

Now we evolve each metric $g_x$ on $E_x$ by Ricci flow to obtain a
fiberwise metric $\{g_x^\bullet\}_{x\in X}$ on $E$ such that each
$g_x^\bullet$ is a round metric. Note that the family $\{g_x^\bullet\}_{x\in X}$
is smooth: on each $U_i$, Theorem~\ref{Continuous} ensures that
the map  $U_i\ra \met^{\mr{sec}=1}(\bS^n)$ given by
$x\mapsto (\alpha_i)_*g_x^\bullet$ is continuous. 
Furthermore, since Ricci flow preserves isometries,
we find that all the maps $U_i\ra \met^{\mr{sec}=1}(\bS^n)$ patch 
to give the round fiberwise metric $\{g_x^\bullet\}_{x\in X}$
on $E$. Define $G_i:U_i\ra\met^{sec=1}(\bS^n)$ by
$G_i(x)=(\alpha_i)_*g_x^\bullet$.
As in (1), we similarly obtain that
\begin{equation}	\label{2}
\alpha_{ij}(x):\big(\,\bS^n\,,\,(\alpha_i)_*g^\bullet_x\,\big)\,\,\longrightarrow\,\,\big(\,\bS^n\,,\,(\alpha_j)_*g^\bullet_x\,\big)
\end{equation}
\noindent is an isometry.

Next we apply the map $\Phi$  from Lemma 1 to each $G_i$ to obtain
$\Phi\circ G_i:U_i\ra \Df(\bS^n)$, with
\[
x\mapsto \phi\0{(\alpha_i)_*g_x^\bullet}.
\]
Let $f_i:U_i\times\bS^n\ra U_i\times\bS^n$
be given by 
\begin{equation}	\label{3}
f_i(x,v)=\big(\,x\,,\,\phi\0{(\alpha_i)_*g_x^\bullet}(v)\,\big).
\end{equation}
Finally for each $i, j$, define $\beta_{i,j}=f_j\circ \alpha_{ij}\circ
f_i^{-1}$. Then, by definition, the following diagram
commutes:
\begin{equation}	\label{4}
\begin{array}{ccc}
U_{ij}\times \bS^n&\xrightarrow{\quad f_i\quad}&
U_{ij}\times \bS^n\\{\mbox{\scriptsize $\alpha_{ij}$}}\downarrow
&&\downarrow {\mbox{\scriptsize $\beta_{ij}$}}\\
U_{ij}\times \bS^n&\xrightarrow{\quad f_j\quad}&
U_{ij}\times \bS^n\end{array}
\end{equation}
Therefore the $f_i$ patch to give a smooth bundle equivalence
$f:E\ra E'$, where $E'$ is the smooth $\bS^n$-bundle
constructed using the $\{\beta_{ij}\}_{i,j}$. 
Write $\beta_{ij}(x,p)=(x,\beta_{ij}(x)(p))$. 
Then, by definition of the $\beta_{ij}$ (see diagram (4)) and the definition 
of the $f_i$ given in (3), we have
\begin{equation} 	\label{5}
\beta_{ij}(x)\quad=\quad\phi\0{(\alpha_j)_*g_x^\bullet}\quad\circ\quad\alpha_{ij}
\quad\circ\quad\phi\0{(\alpha_i)_*g_x^\bullet}^{-1}.
\end{equation}

But by Lemma~\ref{FO1}, both maps $\phi$ in~\eqref{5} are isometries,
and the map $\alpha_{ij}$ appearing in the middle of~\eqref{5} is the same 
map from equation~\eqref{2}, so is also an isometry.
Hence
$$\beta_{ij}(x):(\bS^n,\sigma)\longrightarrow(\bS^n,\sigma)$$
\noindent is an isometry. So $\beta_{ij}(x)\in\mr O(n+1)$.
This proves that the bundle $E'$ is an $\mr O(n+1)$ bundle.
\end{proof}

\part{Analysis}	\label{Analysis}

\section{Outline of the argument} 

Let $\mc S_2$ denote the space of smooth sections of the bundle of symmetric
$(2,0)$-tensor fields over $\bS^n$, and let $\mc S_2^+$ denote the open convex
cone of positive-definite tensor fields (i.e.~Riemannian metrics). Then $\mc S_2$ with the
$C^\infty$ topology is a Frech\'et space. There is a natural right action of the group
$\mc D:=\Df(\bS^n)$
of smooth diffeomorphisms of $\bS^n$ on $\mc S_2^+$ given by $(g,\psi)\mapsto\psi^*g$.
So it is often convenient to regard $\mc S_2^+$ as a union of infinite-dimensional orbits
$\mc O_g$.

Ebin's slice theorem \cite{E68} reveals that $\mc S_2^+$ almost has the structure of an
infinite-dimensional manifold with an exponential map. More precisely, the theorem states that for any
metric $g$, \textsc{(i)} there exists a map $\chi:\mc U\rightarrow\mc D$ of a neighborhood $\mc U$ of
$g$ in $\mc O_g$ such that $\big(\chi(\psi^*g)\big)^*g=\psi^*g$ for all $\psi^*g\in\mc U$; and
\textsc{(ii)} there exists a submanifold $\mc H$ of $\mc S_2^+$ containing $g$ such that the map
$\mc U\times\mc H\rightarrow\mc S_2^+$ given by
\[
(\psi^*g,h)\mapsto\big(\chi(\psi^*g)\big)^*h
\]
is a diffeomorphism onto a neighborhood of $g$ in $\mc S_2$. Infinitesimally, there
is an orthogonal decomposition $T_g\mc S_2=\mc V_g\oplus\mc H_g$, where
$\mc V_g$ is the image of the Lie derivative map (i.e., infinitesimal diffeomorphisms)
and $\mc H_g$ is the kernel of the divergence map.

\subsection{The first step}
Let $\Xi$ be a compact set of strictly $\frac14$-pinched metrics on $\bS^n$,
 $(n\geq3)$, and let $\omega_n=2\pi^{(n+1)/2}/\Gamma((n+1)/2)$ denote the volume of the
canonical round metric $\rg$ induced by the embedding $\bS^n\subset\mb R^{n+1}$.
Noting that the map
\[
\nu:g\mapsto\frac{\omega_n}{\int_{\bS^n}\mr d\mu[g]}\,g
\]
depends continuously on $g$, we consider the compact set $\Xi_*:=\nu(\Xi)$ of
volume-normalized metrics. We evolve each metric $g_0\in\Xi_*$ by normalized
Ricci flow (\textsc{nrf}),
\begin{subequations}
\begin{align}	\label{NRF}
\Dt g&=-2\Rc+\frac{2}{n}\bar R\,g,\\
g(0)&=g_0,
\end{align}
\end{subequations}
where 
\[
\bar R(t):=\fint_{\bS^n}R\,\mathrm d\mu
:=\frac{\int_{\bS^n} R\,\mathrm{d}\mu}
        {\int_{\bS^n}\,\mathrm{d}\mu}
\]
denotes the average scalar curvature of the evolving metric.

Let $\mc O_{\rg}:=\mc D(\rg)$ denote the orbit of the standard metric.
Work of Brendle and Schoen \cite{BS} implies that every solution of \textsc{nrf}
originating in $\Xi_*$ converges in the $C^\infty$ topology to an element of $\mc O_{\rg}$.
Given constants $\alpha_0,\dots,\alpha_\ell>0$, we define a neighborhood of $\mc O_{\rg}$ in
the space of metrics on $\bS^n$ by
\[
\Sigma_{\alpha_0,\dots,\alpha_\ell}
:=\Big\{\psi^*g:\psi\in\mc D\mbox{ and }
\|\nabla^j\big(\Rm[g]-\Rm[\rg]\big)\|_g\leq\beta_j
\mbox{ for all } 0\leq j\leq \ell\Big\}.
\]
The norms above are in $L^\infty$ measured with respect to $g$. It follows
from \cite{BS}, using standard facts about continuous dependence of solutions of parabolic
equations on their initial data, that there exists a time $T_*$ depending only on $\Xi_*$
and $\alpha_0,\dots,\alpha_\ell$ such that every solution $g(t)$ of \textsc{nrf} originating in
$\Xi_*$ belongs to $\Sigma_{\alpha_0,\dots,\alpha_\ell}$ for all times $t\geq T_*$.

We now derive a uniform bound on how far solutions originating in $\Xi_*$ can move in the
direction of the $\mc D$-action up to the finite time $T_*$. Indeed,  because \cite{BS}
implies that solutions $g(t)$ originating in $\Xi_*$ exist for all positive time, it follows
from standard short-time existence results for Ricci flow, again using continuous dependence
on initial data, that there exists $C_*$ such that $\|\Rm(\cdot,t)\|_{g(t)}\leq C_*$ for all solutions
$g(t)$ originating in $\Xi_*$ and all times $0\leq t\leq T_*$. This gives a uniform bound for
the \textsc{rhs} of \eqref{NRF}. It follows, again using the slice theorem, that all metrics $g(T_*)$
belong to the set
\[
\Sigma_{\alpha_0,\dots,\alpha_\ell,}^C
:=\Big\{\psi^*g:\psi\in\mc D,\,
\|\nabla^j\big(\Rm[g]-\Rm[\rg]\big)\|_g\leq\beta_j,\,
\|\psi-\mr{id}\|_{L^\infty}\leq C\Big\},
\]
and that $g(T_*)$ depends continuously on $g(0)=g_0\in\Xi_*$.

\subsection{The second step}
To control how far solutions originating in $\Xi_*$ can move in the direction of the
$\mc D$-action for all time, we craft a modified linearization argument. Note that
while  $\Sigma_{\alpha_0,\dots,\alpha_k}$ is a neighborhood of the set $\mc O_{\rg}$
of stationary solutions of \textsc{nrf}, that set is infinite-dimensional, and standard stability
theorems do not function in the presence of an infinite-dimensional center manifold.

The fact that Ricci flow is not strictly parabolic, again due to its invariance under
the $\mc D$-action, in another obstacle to using standard linearization techniques.
One obtains parabolicity by fixing a gauge, using the well-known DeTurck trick.
While this remedies one problem, it creates another: the gauge-fixed flow introduces
instabilities in the (formerly invisible) direction of the $\mc D$-action. We describe below
how we remedy this.

If $\hat g$ is a chosen element of $\mc O_{\rg}$ with Levi-Civita connection $\hat\Gamma$,
one defines a $1$-parameter family of vector fields $\hat W$ on $(\bS^n,g)$ by
\[
\hat W^k=g^{ij}\hat A_{ij}^k,
\]
where $\hat A$ is the tensor field $\Gamma-\hat\Gamma$.
The decorations indicate that $\hat W$ and $\hat A$ depend on our choice of
gauge $\hat g\in\mc O_{\rg}$  The normalized Ricci--DeTurck flow (\textsc{nrdf})
is the strictly parabolic system
\begin{equation}	\label{NRDF}
\Dt g=-2\Rc+\mc L_{\hat W}(g)+\frac{2}{n}\bar R g,
\end{equation}
where $\mc L$ denotes the Lie derivative. 

We henceforth consider \textsc{nrdf} with initial data
$g(T_*)\in \Sigma_{\alpha,\,\beta_0,\dots,\beta_k}^C$.
By a slight abuse of notation, we continue to denote the evolving
metrics by $g(t)$.

Standard variational formulas imply that the linearization of \eqref{NRDF} at $\hat g$ is the
autonomous, self-adjoint, strictly parabolic system
\begin{equation}	\label{GeneralLinear}
\Dt h = \Delta_\ell h + 2(n-1)\left\{h-\frac{1}{n}\bar H g\right\},
\end{equation}
where $H=g^{ij}h_{ij}$ is the trace of the perturbation $h$, and
$\bar H =\fint_{\bS^n}H\,\mathrm d\mu$ is its average over the manifold. The operator $\Delta_\ell$
denotes the Lichnerowicz Laplacian acting on symmetric $(2,0)$-tensor fields. In coordinates,
\[
\Delta_{\ell}h_{ij}=\Delta_2 h_{ij}+2\hat R_{ipqj}h^{pq}-\hat R_i^k h_{kj}-\hat R_j^k h_{ik},
\]
where $\Delta_2$ is the rough Laplacian acting on $(2,0)$-tensors,
$\Delta_2 h_{ij}=g^{pq}\nabla_p\nabla_q h_{ij}$.

Let $h^\circ$ denote the trace-free part of $h$, defined by
\[
h^\circ=h-\frac{1}{n}Hg.
\]
Then, using the structure of the curvature operator $\hat{\Rm}$, one can rewrite \eqref{GeneralLinear} as
\begin{equation}	\label{Linear}
\Dt h = Lh := \Delta_2 h -2h^\circ+2\frac{n-1}{n}(H-\bar H)g.
\end{equation}
To proceed, we decompose the operator $L$ defined in~\eqref{Linear} as
\[
L h = \frac{1}{n}L_0 H + L_2 h^\circ,
\]
where
\begin{align*}
L_0 H &=\Delta_0 H + 2(n-1)(H-\bar H),\\
L_2 h^\circ&=\Delta_2 h^\circ-2h^\circ.
\end{align*}
Here $\Delta_0$ is the Laplace--Beltrami operator acting on scalar functions.

It is evident that $L_2$ is strictly stable. Using the $L^2$ inner product shows that
\[
(L_0 H,H)=-\|\nabla H\|^2 + 2(n-1)\big(\|H\|^2-\omega_n\bar H^2\big).
\]
So the spectrum of $L_0$ is bounded from above by that of $\Delta_0+2(n-1)$.
Recall that the spectrum of $-\Delta_0$ is $\{j(n+j-1)\}_{j\geq0}$.
The eigenspace $\Phi_j$ consists of the restriction to
$\bS^n\subset\mathbb R^{n+1}$ of $j$-homogeneous polynomials
in the coordinate functions $(x^1,\dots,x^{n+1})$. Hence $\Phi_0$
consists of constant functions. And if $H$ belongs to $\Phi_1$,
then one has $\bar H=0$ and hence
\[
(L_0 H,H)=\{-n+2(n-1)\}\|H\|^2=(n-2)\|H\|^2.
\]
Because $\|h\|^2=\frac{1}{n}\|H\|^2+\|h^\circ\|^2$, we have proved the following.
(Compare \cite{K09}.)

\begin{lemma}\label{LM:spectral}
For all $n\geq3$, the operator $-L$ defined in \eqref{Linear} is unstable.

It has a single unstable eigenvalue $2-n$ with $(n+1)$-dimensional eigenspace consisting
of infinitesimal conformal diffeomorphisms $\vp\hat g$.

Its $1$-dimensional null eigenspace is $\{c\hat g:c\in\mathbb R\}$.

Its remaining eigenspaces are strictly stable, with eigenvalues $\geq4$.
\end{lemma}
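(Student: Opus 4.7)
The plan is to exploit the block decomposition $Lh = \tfrac{1}{n}(L_0 H)\hat g + L_2 h^\circ$ established just above. Because the splitting $h = \tfrac{1}{n}H\hat g + h^\circ$ is $L^2$-orthogonal and preserved by $L$ (using that $\Delta_2(f\hat g)=(\Delta_0 f)\hat g$ and that $\Delta_2$ preserves trace-freeness on a parallel background), the spectrum of $-L$ is the disjoint union of the spectra of $-L_0$ on $C^\infty(\bS^n)$ and $-L_2$ on trace-free symmetric $(2,0)$-tensors. I would analyze each block independently and recombine.

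For $L_0$, I would expand $H=\sum_{j\geq 0}H_j$ along the eigenspaces $\Phi_j$ of $-\Delta_0$. Because $\bar H_j=0$ for every $j\geq 1$, the operator acts diagonally with $L_0 H_j=\bigl(2(n-1)-j(n+j-1)\bigr)H_j$ when $j\geq 1$, and $L_0 H_0=0$ on constants (where $H-\bar H=0$). Reading off the eigenvalues of $-L_0$: one gets $0$ on the constants (the null line $\{c\hat g\}$); $2-n$ on $\Phi_1$, yielding, for $n\geq 3$, an unstable eigenspace $\{\vp\hat g:\vp\in\Phi_1\}$ of dimension $n+1$; exactly $4$ on $\Phi_2$; and strictly larger eigenvalues $j(n+j-1)-2(n-1)$ on each $\Phi_j$ with $j\geq 3$.

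For $L_2=\Delta_2-2$ on trace-free symmetric $(2,0)$-tensors, the required input is the spectral estimate $-\Delta_2\geq 2$ on trace-free symmetric 2-tensors over $(\bS^n,\hat g)$, which yields $-L_2\geq 4$ and matches the gap coming from $L_0$. I expect this to be the principal obstacle: unlike the elementary spherical-harmonic calculation for $L_0$, bounding the rough Laplacian on trace-free symmetric 2-tensors requires either a Bochner-type identity relating $\Delta_2$ to scalar and $1$-form Laplacians on auxiliary fields (together with the standard decomposition $h^\circ = h^{TT} + \mc L_X^\circ \hat g$ into a transverse-traceless piece and a traceless Lie derivative), or a direct appeal to the known spectrum of the rough/Lichnerowicz Laplacian on the round sphere as recorded in \cite{K09}.

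Finally, to geometrically identify the $(n+1)$-dimensional unstable eigenspace with infinitesimal conformal diffeomorphisms, I would invoke the Obata-type identity $\nabla^2\vp+\vp\hat g=0$ satisfied by every first spherical harmonic $\vp\in\Phi_1$ on the round sphere. The gradient $X=\nabla\vp$ is then a conformal Killing field, and $\mc L_X\hat g=2\nabla^2\vp=-2\vp\hat g$. Thus each $\vp\hat g$ in the unstable eigenspace is the Lie derivative of $\hat g$ along a conformal Killing field, and conversely every non-Killing conformal Killing field on $(\bS^n,\hat g)$ produces such a $\vp\hat g$. Combined with the $L_0$/$L_2$ spectral analysis, this delivers all assertions of the lemma.
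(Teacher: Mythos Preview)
Your approach is correct and matches the paper's: the argument there is precisely the block decomposition $Lh=\tfrac{1}{n}(L_0 H)\hat g+L_2 h^\circ$ followed by the spherical-harmonic analysis of $L_0$, all of which appears in the discussion immediately preceding the lemma statement. You are in fact more careful than the paper on the trace-free block---the paper simply declares the strict stability of $L_2$ ``evident'' and points to \cite{K09}, whereas you correctly flag the bound $-\Delta_2\geq 2$ on trace-free symmetric $2$-tensors as the substantive input and propose the same citation or a Bochner/TT route; your Obata identification of the unstable eigenspace with conformal Killing directions is a bonus the paper leaves implicit.
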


For times $t\geq T_0=T_*$, we decompose solutions $g(t)$ of \textsc{nrdf} as
\begin{align}
g(t)=\left\{a(t)+\sum_{i=1}^{n+1}b_i(t)\vp_i\right\}\hat g + \check g,\label{eq:decomG}
\end{align}
where $\vp_i$ are the infinitesimal conformal diffeomorphisms, and $\check g$ belongs
to the strictly stable spectrum of $-L$. Note that $a(t)$ is a redundant parameter: the
fact that $g(t)$ has volume $\omega_n$ means it can be computed from the
other elements of the decomposition. Indeed,  as we now show, the parameter $a$ is
determined by $\check{g}$ and $b_k$ $(k=1,2,\dots,n+1)$.

\begin{lemma}\label{LM:redA}
If $\|\check{g}\|$, $|a-1|$, and $\sum_{k}|b_k|$ are sufficiently small, then $a$ is uniquely determined,
and\footnote{We write $|F|\ls|G|$ if there exists a uniform constant $C$ such that
$|F|\leq C|G|$.}
\begin{equation}
|a-1|\ls \sum_{k}|b_k|^2+\|\check{g}\|_{\infty}^2.\label{eq:meas}
\end{equation}
\end{lemma}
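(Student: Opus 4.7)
The plan is to treat the volume normalization preserved by \textsc{nrf}, namely $\mathrm{Vol}(g(t))=\omega_n$, as a single scalar constraint that ties $a$ to $(b,\check g)$. Writing $f:=a+\sum_i b_i\varphi_i$ and $E:=\hat g^{-1}\check g$, so that $\hat g^{-1}g=fI+E$, the constraint reads
\[
V(a,b,\check g):=\int_{\bS^n}\sqrt{\det(fI+E)}\,d\mu[\hat g]-\omega_n=0,
\]
a smooth scalar equation on a neighborhood of $(a,b,\check g)=(1,0,0)$ in the pointwise topology. The goal is to solve this for $a$ and extract the quadratic estimate~\eqref{eq:meas}.

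First I would compute the first variation of $V$ at $(1,0,0)$ using the standard identity $D\,\mathrm{Vol}(\hat g)[h]=\tfrac12\int\mathrm{tr}_{\hat g}(h)\,d\mu[\hat g]$. In the $a$-direction this gives $\partial_a V|_{(1,0,0)}=\tfrac{n\omega_n}{2}\ne 0$, so the implicit function theorem produces a unique solution $a=a(b,\check g)$ on a small neighborhood of $(0,0)$. In the $b_i$-direction it gives $\tfrac{n}{2}\int\varphi_i\,d\mu[\hat g]=0$, since $\varphi_i\in\Phi_1$ is $L^2$-orthogonal to constants. In any $\check g$-direction it gives $\tfrac12\int\mathrm{tr}_{\hat g}\check g\,d\mu[\hat g]=(\check g,\hat g)_{L^2(\hat g)}$, which vanishes because $-L$ is self-adjoint and $\check g$ lies in the strictly stable subspace, hence is $L^2$-orthogonal to the null eigenspace $\mathbb R\hat g$ identified in Lemma~\ref{LM:spectral}. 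All first-order variations in $(b,\check g)$ at $(1,0,0)$ therefore vanish.

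Taylor's theorem then yields
\[
V=\tfrac{n\omega_n}{2}(a-1)+Q(a-1,b,\check g)+O\!\bigl((|a-1|+|b|+\|\check g\|_\infty)^3\bigr),
\]
with $Q$ a quadratic form whose coefficients are bounded integrals involving only the fixed smooth data $\hat g$ and $\{\varphi_i\}$. Setting $V=0$ and solving for $a-1$ produces
\[
|a-1|\lesssim (a-1)^2+|a-1|\,|b|+|b|^2+|a-1|\,\|\check g\|_\infty+|b|\,\|\check g\|_\infty+\|\check g\|_\infty^2.
\]
For sufficiently small data the three terms carrying a factor of $|a-1|$ are absorbed into the left-hand side, and the cross term $|b|\,\|\check g\|_\infty$ is dominated by $|b|^2+\|\check g\|_\infty^2$, yielding \eqref{eq:meas}.

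The main obstacle, I expect, is ensuring that the linear-in-$\check g$ contribution to $V$ genuinely vanishes. This uses two ingredients: the pointwise variational identity for $\sqrt{\det(\hat g^{-1}g)}$ at $g=\hat g$, and the self-adjointness of $-L$ guaranteeing that eigenspaces for distinct eigenvalues are $L^2(\hat g)$-orthogonal. Both are essentially standard but worth recording explicitly; once they are in hand the remainder is a routine application of the implicit function theorem together with a term-by-term absorption.
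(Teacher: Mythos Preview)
Your proposal is correct and follows essentially the same route as the paper: both expand the volume constraint $\int\sqrt{\det g}\,d\mu=\omega_n$ to first order in $(a-1,b,\check g)$, use the orthogonality $\check g\perp\hat g$ and $\int\varphi_i\,d\mu=0$ to kill the linear terms in $b$ and $\check g$, and then read off the quadratic bound and uniqueness from the nonvanishing $a$-derivative. The only cosmetic difference is that the paper factors out $a^{n/2}$ before expanding in $h=a^{-1}(b_ix^i\hat g+\check g)$, whereas you Taylor-expand directly at $(1,0,0)$ and invoke the implicit function theorem; these are equivalent.
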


\begin{proof}
Recall that we normalized $\mr{Vol}(g)=\mr{Vol}(\sigma)=\omega_n$,
and that in~\eqref{eq:decomG}, we decomposed $g$ as 
\begin{equation}	\label{eq:decommg}
g=\big(a+b_i x^i\big)\hat{g}+\dn{g},
\end{equation}
where $\check g$ is a chosen element of $\mc O_\sigma$, and
$\check g\perp \{\hat{g},\ x^1\hat{g},\dots,x^{n+1}\hat g\}$.
Using the standard fact that $\frac{d}{d\ve}\big\{\log\det(\hat g+\ve h)\big\}=\hat g^{ij}h_{ij}$, we rewrite
$ \sqrt{\det g}$, obtaining
\begin{equation}	\label{eq:expandDetG}
\sqrt{\det g}=a^{\frac{n}{2}}\sqrt{\det \hat g}\,\Big\{1+\frac12\hat g^{-1}h+\mc O(\|h\|_{\infty}^2)\Big\},
\end{equation}
where
\[
h:=\frac{1}{a}\big\{b_i(t)x^i\hat g+\check g\big\}.
\]
Consequently, writing $\mr d\mu\equiv\mr d\mu[\sigma]$, one has
\[
\int \sqrt{\det g}\,\mr d\mu=a^{\frac{n}{2}}
\Big\{\int \sqrt{\det\hat g}\,\mr d\mu+\frac12\int \hat g^{-1}h\,\mr d\mu
+\int \mc O(\|h\|_{\infty}^2)\,\mr d\mu\Big\}.
\]
Using the orthogonality condition $\check g\perp\hat g$ and volume normalization, this becomes
\[
\omega_n=a^{\frac{n}{2}}\omega_n\big\{1+\|h\|_\infty^2\big\},
\]
which implies estimate~\eqref{eq:meas}.

To see that $a$ is unique, we regard $a$ in \eqref{eq:decommg} as an independent parameter, and
prove that $\big|\partial_a \int \sqrt{\det g}\,\mr d\mu\big|\geq c$ for a constant $c>0$ provided that $|a-1|$ is small.
Specifically, computing as above, we find that
\[
\frac{\partial}{\partial a} \int \sqrt{\det g}\,\mr d\mu=\omega_n
+\mc O\left(\sum_i |b_i| + |a-1| + |(\hat g)^{-1}\check{g}|\right).
\]
So $\int \sqrt{\det g}\,\mr d\mu$ depends almost linearly on $a$, provided that 
$\sum_i|b_i|+|a-1|+|(\hat g)^{-1}\check{g}|$ is small. This and the normalization
$\int \sqrt{\det g}\,\mr d\mu=\omega_n$ imply that $a$ is unique.
\end{proof}

Our modified linearization argument follows ideas introduced in \cite{GKS} and \cite{GK}.
At a sequence of times $T_\gamma\nearrow\infty$, with $T_0=T_*$, we
construct  DeTurck background metrics $\hat g_\gamma$ such that all $b_i(T_\gamma)=0$, and
all $b_i(t)$ remain small for $T_\gamma\leq t\leq T_{\gamma+1}$. We obtain estimates
that prove that \textsc{(i)} the sequence $\hat g_\gamma$ converges, and \textsc{(ii)}
each solution $g(t)$ of \textsc{nrf-nrdf} converges to an element of
$\mc O_{\rg}$ that depends continuously on $g_0\in\Xi_*$. The details follow.

\section{Ricci--DeTurck flow for a sequence of background metrics}	\label{MainConstruction}
In the remainder of this paper, the norms $\|\cdot \|_{2}$, $\|\cdot\|_{\infty}$, the inner product
$\langle \cdot,\cdot\rangle$, and the definition of orthogonality $\perp$ should be understood with respect
to the metric $\sigma$ induced on the unit sphere in $\mb R^{n+1}$.
\smallskip

In this section, we study \textsc{nrdf} defined in \eqref{NRDF},
\[
\Dt g=-2\Rc+\mc L_{\hat W}(g)+\frac{2}{n}\bar R g.
\]
As is well known, this equation is related to normalized Ricci flow,
\[
\Dt \tilde{g}=-2\Rc[\tilde{g}]+\frac{2}{n}\bar R[\tilde g] \tilde{g}
\]
by the relation $\phi_t^{*}g(t)=\tilde{g}(t)$,
where $\phi_t$ is the one-parameter family of diffeomorphisms
$\phi_t\colon\mathbb{S}^n\rightarrow \mathbb{S}^n$
defined by 
\begin{align}
\Dt \phi_{t} (p)=&-\hat W(\phi_t(p),t),\\
\phi_0=&\mr{Id}.
\end{align}

It is well known that $\Rc[g]$ is diffeomorphism invariant.
Taking advantage of this fact, we will study solutions of the system
\begin{subequations}
\begin{align}	\label{NRDF2}
\Dt g=&-2\Rc+\mc L_{\hat W}(g)+\frac{2}{n}\bar R g,\\
g(T_0)=& \phi^* g_0,
\end{align}
\end{subequations}
noting  its evolution is identical to \eqref{NRDF}. Here $\phi$ is a diffeomorphism
to be chosen. That choice will be the central topic in what follows.

We choose $\phi$ from an admissible class of diffeomorphisms. 
Intuitively, a diffeomorphism is admissable if it is ``almost affine.'' More precisely,
for any vector $\vec\epsilon=(\epsilon_1,\dots,\epsilon_{n+1})\in \mb R^{n+1}$,
with $\|\vec\epsilon\|$ sufficiently small, we say a diffeomorphism
$\phi(\vec\epsilon):\mb S^n\rightarrow\mb S^n$ is \emph{admissible} if
\begin{equation}	\label{admissible}
\phi^*(\vec\epsilon)g=\Big(1+\sum_{k=1}^{n+1}\epsilon_k x^k\Big)g+\mc O\big(\|\epsilon\|^2\big).
\end{equation}
Existence of admissible diffeomorphisms follows easily from the implicit function theorem.

Now we are ready to state the main theorem of this section.
Recall that 
\begin{align}	\label{eq:g0Decom} 
g_0=\left\{a(0)+\sum_{k}b_k(0)x^{k}\right\}\hat{g}+\check {g}(0),
\end{align}
with $\check{g}(0)\perp \hat{g},\ x^{k}\hat{g}$ for $k=1,\dots,n+1$.
Define a constant $\delta_0$ by
\begin{equation}	\label{def:delta01}
\delta_0:=|1-a(0)|+\sum_{k=1}^{n+1}|b_k(0)|
+\sum_{|\alpha|=0}^{2n} \|\nabla^\alpha \check{g}(0)\|_2.
\end{equation}
 We shall prove:
 
\begin{theorem}\label{THM:conver}
There exist an element $\hat g\in\mc O_\sigma$ and
an admissible diffeomorphism $\phi_{\infty}=\phi(\vec\epsilon_{\infty})$,
both depending continuously on $g_0$,
with $\vec\epsilon_{\infty}\in \mathbb{R}^{n+1}$ and $\|\vec\epsilon_{\infty}\|\ll 1$,
such the the solution $g_\infty(t)$ of the system
\begin{subequations}
\begin{align}
\Dt g_\infty(t)=&-2\Rc[g_\infty(t)]+\mc L_{\hat W}[g_\infty(t)]+\frac{2}{n}\bar R[g_\infty(t)]\,g_\infty(t),\\
g_\infty(0)=& \phi^*_{\infty} g(T_0),
\end{align}
\end{subequations}
satisfies the estimate
\begin{equation}
\|g_\infty(t)-\hat{g}\|_{\infty}\leq C \delta_0 e^{-\frac{3}{2}t}
\end{equation}
for some $C<\infty$, and thus has the property that
\begin{equation}
\hat{g}=\lim_{t\rightarrow \infty}g_\infty(t)\quad\mbox{in }L^\infty.
\end{equation}

\end{theorem}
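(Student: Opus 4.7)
The plan is to construct, inductively in $\gamma=0,1,2,\dots$, a sequence of times $T_\gamma\nearrow\infty$, background metrics $\hat g_\gamma\in\mc O_\sigma$, and admissible diffeomorphisms $\phi_\gamma=\phi(\vec\epsilon_\gamma)$ so that, when the NRDF solution is decomposed at $t=T_\gamma$ against $\hat g_\gamma$ in the form \eqref{eq:decomG}, the conformal-mode coefficients $b_{i,\gamma}$ vanish at each restart. In effect, one uses the gauge freedom to kill the $(n+1)$-dimensional unstable eigenspace of $-L$ identified in Lemma~\ref{LM:spectral}, leaving only strictly stable modes (eigenvalues $\geq 4$) to evolve nonlinearly; the volume-normalization scalar $a_\gamma$ is then pinned down by Lemma~\ref{LM:redA}. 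The desired $\hat g=\lim_\gamma\hat g_\gamma$ and $\phi_\infty$ will arise as limits of this iteration.

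\textbf{Base step.} From the decomposition \eqref{eq:g0Decom}, I would invoke the implicit function theorem on the map sending $\vec\epsilon$ to the conformal components of $\phi^*(\vec\epsilon)g(T_0)$ relative to a candidate round metric in $\mc O_\sigma$. By \eqref{admissible}, its differential at $\vec\epsilon=0$ is essentially the identity on the $(n+1)$-dimensional conformal-mode space, so the theorem produces $\vec\epsilon_0$ of size $\ls\sum_k|b_k(0)|$ and a background $\hat g_0\in\mc O_\sigma$ for which $\phi^*(\vec\epsilon_0)g(T_0)=a_0\hat g_0+\check g_0$ has $b_{i,0}=0$.

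\textbf{Inductive step and exponential decay.} On $[T_\gamma,T_{\gamma+1}]$ I would treat NRDF as a perturbation of the linearization $\Dt h=Lh$ about $\hat g_\gamma$. Lemma~\ref{LM:spectral} together with standard energy and semigroup estimates should give, schematically,
\[
\sum_{|\alpha|\leq 2n}\|\nabla^\alpha\check g(t)\|_2\ls e^{-4(t-T_\gamma)}\sum_{|\alpha|\leq 2n}\|\nabla^\alpha\check g_\gamma\|_2+(\text{quadratic error}),
\]
\[
\sum_i|b_i(t)|\ls e^{(n-2)(t-T_\gamma)}\cdot(\text{quadratic in smallness, since }b_{i,\gamma}=0).
\]
I would fix the step $T_{\gamma+1}-T_\gamma=\tau$ large enough that the linear decay of $\check g$ dominates any back-reaction, then reapply the implicit function theorem at $T_{\gamma+1}$ to absorb the freshly produced conformal modes into a small shift $\hat g_\gamma\mapsto\hat g_{\gamma+1}$ and a new admissible diffeomorphism, yielding $\|\hat g_{\gamma+1}-\hat g_\gamma\|\ls\|\check g_\gamma\|^2$. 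Summing the resulting geometric series shows $\{\hat g_\gamma\}$ is Cauchy; writing $\hat g$ for its limit and composing the accumulated gauge adjustments into a single admissible $\phi_\infty=\phi(\vec\epsilon_\infty)$ gives the bound $\|g_\infty(t)-\hat g\|_\infty\ls\delta_0 e^{-3t/2}$, with the rate $3/2$ chosen conservatively below the stable gap $4$ to absorb the small geometric loss at each restart. Continuous dependence on $g_0$ follows because each implicit-function solve and each finite-time NRDF evolution is continuous in its input, and the uniform exponential rate promotes this continuity to the limits.

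\textbf{Main obstacle.} The hardest part will be the simultaneous bookkeeping across the induction: verifying at every step that the new background correction is quadratically small in the previous $\check g$, that the per-step growth factor $e^{(n-2)\tau}$ is dominated by that quadratic smallness, and that the high-order Sobolev norms (up to $2n$ derivatives, as in \eqref{def:delta01}) propagate uniformly in $\gamma$ through the DeTurck flow. This is precisely the structural issue that the modified-linearization scheme of \cite{GKS,GK} was built to handle, and I would adapt that machinery to the present setting. A secondary concern is to keep the ansatz \eqref{admissible} valid throughout, but since $\|\vec\epsilon_\infty\|$ is bounded by a geometric sum of order $\delta_0$, this closes provided $\delta_0$ is small enough to begin with.
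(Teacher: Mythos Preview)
Your high-level plan---use the gauge freedom to suppress the $(n+1)$-dimensional unstable eigenspace of $-L$ and let the strictly stable modes decay---is exactly right, but the mechanism you propose differs from the paper's and leaves a genuine gap.

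The paper does \emph{not} restart. It keeps the DeTurck background $\hat g$ \emph{fixed} throughout (the sequence $\hat g_\gamma$ mentioned in the outline never actually appears in the detailed proof) and, for each finite horizon $T$, chooses an admissible diffeomorphism $\phi_T$, applied once to the \emph{initial} data, so that the resulting solution $g(t,T)$ of \eqref{eq:deTurck}--\eqref{eq:initial-cond} has $b_k(T,T)=0$ at the \emph{terminal} time. The unstable component is then obtained by integrating \emph{backward} from that terminal condition,
\[
b_k(t,T)=-\int_t^T e^{-(n-2)(s-t)}\,\frac{\langle x^k\hat g,\,N_L[h(s,T)]\rangle}{\langle x^k\hat g,\,x^k\hat g\rangle}\,\mr ds,
\]
which gives a uniformly \emph{decaying} bound $|b_k(t,T)|\ls e^{-t}\delta_0^2$ on all of $[0,T]$, with no interval-by-interval bookkeeping. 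One then shows (Lemma~\ref{LM:gaugeCon}) that $T\mapsto\phi_T^*g_0$ is Cauchy and passes to the limit $\phi_\infty$.

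Your scheme instead zeros the $b_i$ at the \emph{start} of each interval and absorbs the regrown modes at $T_{\gamma+1}$ by shifting $\hat g_\gamma\mapsto\hat g_{\gamma+1}$ and applying a fresh diffeomorphism there. Two problems arise. First, $\hat g$ enters the evolution through the DeTurck vector field $\hat W$, so changing it means each interval solves a \emph{different} PDE; the concatenation is not a solution of the single system in the statement of the theorem. Second, the gauge adjustments you describe are made at the successive times $T_\gamma$, not all at $t=0$, and admissible diffeomorphisms in the sense of~\eqref{admissible} are not closed under composition; so ``composing the accumulated gauge adjustments into a single admissible $\phi_\infty$'' does not obviously produce the object the theorem requires. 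To close either gap you would effectively have to translate each mid-stream adjustment back to a change of the initial data for a \emph{fixed} background---and that is precisely the paper's shooting construction.
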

We will divide the proof into several steps that appear below.
We note that Theorem~\ref{Continuous} follows easily from Theorem~\ref{THM:conver}
and the estimates obtained in its proof.

The general strategy is to approximate $\vec{\epsilon}_{\infty}$ by finding an
``optimal'' admissible diffeomorphism, associated with $\vec\epsilon_{T_\gamma}$, for each time $T_\gamma$,
and proving that at least a subsequence of $\{\vec\epsilon_{T_\gamma}\}$ is Cauchy.
Then we define $\vec\epsilon_{\infty}$ as the limit of that subsequence.

For simplicity of notation and where no confusion will result, we henceforth ignore the index $\gamma$, 
making the identifications $0=T_\gamma$, $T=T_{\gamma+1}$, and writing $g(t,T)=g_T(t)=g_{T_\gamma}(t)$.
Thus we consider the system
\begin{subequations}
\begin{align}
\Dt g(t,T)=&-2\Rc[g(t,T)]+\mc L_{\hat{W}} [g(t,T)]+\frac{2}{n}\bar{R}[g(t,T)]\,g(t,T),	\label{eq:deTurck}\\
g(0,T)=&\phi^*_{T} g_0,	\label{eq:initial-cond}
\end{align}
\end{subequations}
in a time interval $t\in[0,T]$. In Section~\ref{sec:LMendP}, we prove the following result.

\begin{lemma}\label{LM:endPoint}
There exists a (possibly small) constant $M\in (0,\infty]$ such that for any
$T\in [0,M]$ (or $[0,\infty)$ if $M=\infty$), there exists a unique admissible diffeomorphism
$\phi_{T}=\phi(\vec\epsilon_{T})\colon\mb{S}^n\rightarrow \mb{S}^n$ such that for any time
$t\in [0,T]$, the solution of~\eqref{eq:deTurck} with initial condition~\eqref{eq:initial-cond} decomposes as
\begin{equation}	\label{eq:endPo}
g(t,T)=\Big\{a(t,T)+\sum_{k} b_k(t,T) x^k\Big\}\hat{g}+\check{g}(t,T),
\end{equation} with 
\[
b_k(T,T)=0\quad\mbox{for all}\quad k=1, \dots, n+1,
\]
and for all $t\in [0,\ T]$,
\begin{align}
\check{g}(t,T)\perp \hat{g},\,x^{k}\hat{g}.
\end{align}
\end{lemma}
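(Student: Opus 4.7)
The plan is a shooting argument via the inverse function theorem. For $\vec\epsilon$ in a small ball about $0$ in $\R^{n+1}$, I would consider the map
$$
\Psi_T(\vec\epsilon):=\big(b_1(T,T;\vec\epsilon),\dots,b_{n+1}(T,T;\vec\epsilon)\big)\in\R^{n+1},
$$
where $g(t,T;\vec\epsilon)$ denotes the solution of~\eqref{eq:deTurck} with initial datum $\phi^*(\vec\epsilon)g_0$, decomposed as in~\eqref{eq:endPo}. The admissible $\phi_T$ demanded by the lemma will be produced as the unique small zero of $\Psi_T$; the fact that we have exactly as many parameters $\epsilon_k$ as there are unstable modes $x^k\hat g$ is no coincidence.

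The first step is to expand $\Psi_T$ near $\vec\epsilon=0$. Combining the admissibility identity~\eqref{admissible} with the decomposition~\eqref{eq:g0Decom} of $g_0$ and projecting onto the unstable subspace spanned by $\{x^k\hat g\}$, one gets
$$
b_k(0,T;\vec\epsilon)=b_k(0)+a(0)\,\epsilon_k+r_k(\vec\epsilon),\qquad|r_k(\vec\epsilon)|\ls\|\vec\epsilon\|^2+\|\vec\epsilon\|\,\delta_0,
$$
because the cross term $\epsilon_j x^j\cdot b_\ell(0)x^\ell\hat g$ sits in the subspace of even spherical harmonics, and $\epsilon_j x^j\cdot\check g(0)$ is similarly orthogonal to the $x^k\hat g$ modes up to the stated error from stable content of $\check g(0)$. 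By Lemma~\ref{LM:spectral}, the eigenvalue of $L$ on the unstable subspace is $n-2$, so the variation-of-constants formula for~\eqref{eq:deTurck} around $\hat g$ yields
$$
b_k(T,T;\vec\epsilon)=e^{(n-2)T}\,b_k(0,T;\vec\epsilon)+\mc N_k(T;\vec\epsilon),
$$
where $\mc N_k$ collects the quadratic and higher nonlinearities. The key claim, to be justified by a bootstrap, is that on trajectories with $\check g(t,T)$ small in the stable cone and $b_k(t,T)$ kept small en route to $b_k(T,T)=0$, the nonlinear term obeys $|\mc N_k(T;\vec\epsilon)|\ls e^{(n-2)T}(\delta_0^2+\|\vec\epsilon\|\,\delta_0)$.

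Factoring the common prefactor $a(0)e^{(n-2)T}$ out of $\Psi_T(\vec\epsilon)=0$ then converts the problem into the fixed-point equation
$$
\vec\epsilon=-\frac{\vec b(0)}{a(0)}+\mc R_T(\vec\epsilon),\qquad|\mc R_T(\vec\epsilon)|\ls\|\vec\epsilon\|^2+\|\vec\epsilon\|\,\delta_0+\delta_0^2,
$$
in which the amplification $e^{(n-2)T}$ has canceled out. For $\delta_0$ sufficiently small this right-hand side is a contraction of a ball of radius $C\delta_0$ into itself, producing the unique $\vec\epsilon_T=\mc O(\delta_0)$ sought by the lemma; continuous dependence of $\vec\epsilon_T$ on $g_0$ will follow from the corresponding continuity of each ingredient, and its uniqueness within the admissible class from the contraction property. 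This argument will give $M=\infty$ when $\delta_0\ll 1$, and a finite $M>0$ in general from short-time existence for \textsc{nrdf}.

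The hard part will be closing the bootstrap that underlies the estimate on $\mc N_k$. I would implement it by running parabolic energy estimates on $\check g(t,T)$ against the strictly stable part of $-L$ (the spectral gap $\geq 4$ from Lemma~\ref{LM:spectral} is the crucial input), coupling them with the ODEs for $b_k(t,T)$ obtained by orthogonal projection of~\eqref{eq:deTurck}, and invoking Lemma~\ref{LM:redA} to slave the redundant variable $a(t,T)$ to the remaining data. This is precisely the modulation-equation strategy alluded to in the outline and developed in~\cite{GKS,GK}; once in place, the contraction principle delivers $\vec\epsilon_T$ and the decomposition~\eqref{eq:endPo}.
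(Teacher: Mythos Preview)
Your shooting/contraction argument is essentially the same implicit-function-theorem idea the paper uses: it first handles $T=0$ by choosing $\vec\epsilon_0$ so that $\phi^*(\vec\epsilon_0)g_0\perp x^k\hat g$, and then for small $T$ writes $g(t,T)=g(t,0)+[\phi^*(\vec\epsilon)g_0-\phi_0^*g_0]+\rho(t)$ with $\|\rho(t)\|\ls t\cdot\|\phi^*(\vec\epsilon)g_0-\phi_0^*g_0\|$ and perturbs. For the lemma as stated (a possibly small $M>0$), either organization works.

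Where you overreach is the claim that the contraction delivers $M=\infty$ once $\delta_0\ll1$. Your bound $|\mc N_k(T;\vec\epsilon)|\ls e^{(n-2)T}(\delta_0^2+\|\vec\epsilon\|\delta_0)$ is conditioned on ``$b_k(t,T)$ kept small en route,'' but a contraction requires the estimate for \emph{every} $\vec\epsilon$ in the ball of radius $C\delta_0$, not just the eventual fixed point. For a generic such $\vec\epsilon$ the unstable mode grows like $b_k(t,T;\vec\epsilon)\sim e^{(n-2)t}\delta_0$; once $t\gtrsim\tfrac{1}{n-2}\log(1/\delta_0)$ the linearization about $\hat g$ is no longer valid, and the quadratic nonlinearity integrates to order $e^{2(n-2)T}\delta_0^2$ rather than $e^{(n-2)T}\delta_0^2$, so the prefactor does \emph{not} cancel out of the fixed-point equation. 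The paper sidesteps this by keeping Lemma~\ref{LM:endPoint} deliberately local in $T$, then proving the bootstrap in Lemma~\ref{LM:endPo} \emph{after} the optimal $\phi_T$ is already in hand---so that $b_k(T,T)=0$ and the unstable ODE can be integrated \emph{backward} from that terminal condition, yielding $|b_k(t,T)|\ls\delta_0^2 e^{-t}$---and only then extending to $M=\infty$ via Lemma~\ref{LM:Minfty}. Your final paragraph correctly flags this bootstrap as the hard part, but it is the content of those later lemmas, not of the local existence statement you are asked to prove here.
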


Henceforth, we call the admissible diffeomorphism $\phi_{T}=\phi(\vec\epsilon_{T})$
constructed in Lemma~\ref{LM:endPoint} the \emph{optimal admissible diffeomorphism} at time $T$.
In what follows, we use bootstrap arguments to  derive estimates for the solution $g(t,T)$ for $t\in[0,M]$,
where $M$ is the constant introduced in Lemma~\ref{LM:endPoint}.
\smallskip

By the construction of the diffeomorphism $\phi_0=\phi(\vec\epsilon_0)$ at time $T=0$, we have
\[
\phi_0^{*}g_0=a(0,0)\hat{g}+\check{g}(0,0)
\]
By~\eqref{admissible}, this  implies that
\[
\vec\epsilon_0= \big(b_1(0),b_2(0),\dots, b_{n+1}(0)\big)+\mc O\big(\textstyle\sum_{k=1}^{n+1} b_{k}^2(0)\big).
\]
Furthermore, because $\check g(0,0)\in\Sigma_{\alpha_0,\dots,\alpha_\ell,}^C$, we may assume its 
Sobolev norms satisfy
\begin{equation}	\label{GoodStart}
\sum_{|\alpha|=0}^{2n}\|\nabla^{\alpha}\check{g}(0,0)\|_2\leq 2 \delta_0,
\end{equation}
where $\delta_0$ is the constant defined in~\eqref{def:delta01}.

Next we make certain observations about the optimal diffeomorphisms $\phi_{T}=\phi(\vec\epsilon_{T})$
whose existence is proved in Lemma~\ref{LM:endPoint}, at least for $T$ sufficiently small. By
construction of $\phi_0$ and continuity of the solution, we know that there exists a time $M_{1}>0$ such that
for any time $T\in [0,M_1]$, the vector $\vec\epsilon_{T}$ satisfies
\begin{equation}		\label{eq:vecEpsi34}
\|\vec\epsilon_{T}\|\leq \delta_{0}^{\frac{3}{4}},
\end{equation}
where $\delta_0$ is the constant in \eqref{def:delta01}.
This gives some control on the initial condition,
\[
\phi_{T}^{*}g_0=\Big\{a(0,T)+\sum_{k=1}^{n+1} b_{k}(0,T)x^{k}\Big\}\hat{g}+\check{g}(0,T),
\]
where $\check{g}(0,T)\perp \hat{g},\ x^{k}\hat{g}$, and the Sobolev 
norms of $\check g(0,T)$ satisfy
\begin{equation}	\label{2n-derivatives}
\sum_{|\alpha|=0}^{2n}\|\nabla^{\alpha}\check{g}(0,T)\|_{2}\leq 2\delta_0.
\end{equation}
For the evolving solution
\[
g(t,T)=\Big\{a(t,T)+\sum_{k=1}^{n+1}b_k(t,T)x^{k}\Big\}\hat{g}+\check{g}(t,T),
\]
we have the following estimates.

\begin{lemma}	\label{LM:endPo}
In the time interval $t\in [0,T]$, with $T\leq M_1$, if we have
\begin{equation}	\label{eq:bootstrap}
\sum_{|\alpha|\leq 2n} \|\nabla^{\alpha}\check{g}(t,T)\|_{2}\leq \delta_{0}^{\frac{2}{3}}e^{-\frac{2}{3}t},
\end{equation}
then there exists a constant $c_1$, independent of $t$ and $T$, such that
\begin{subequations}
\begin{align}
|b_k(t,T)|&\leq c_1e^{- t} \delta_0^2,\quad\qquad (k=1,\dots,n+1);	\label{five-one}\\
\sum_{|\alpha|\leq 2n}\|\nabla^{\alpha}\check{g}(t,T)\|_{2}&\leq c_1e^{- t}\delta_0;	\label{eq:vecEpsi910}\\
\|\vec\epsilon_{T}\|&\leq \delta_{0}^{\frac{9}{10}}.	\label{five-three}
\end{align}
\end{subequations}
\end{lemma}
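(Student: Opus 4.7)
My plan is to close a coupled bootstrap argument by projecting the Ricci--DeTurck flow~\eqref{eq:deTurck} onto the three $L$-invariant subspaces identified in Lemma~\ref{LM:spectral}: the null line $\R\hat g$, the $(n+1)$-dimensional unstable conformal subspace $\mr{span}\{x^k\hat g\}$, and the strictly stable complement (with spectral gap $\geq 4$) containing $\check g$. Writing $h=g(t,T)-\hat g$, the flow has the schematic form $\partial_t h = Lh + Q(h,h)$, which splits into a scalar ODE for $a$ (redundant by Lemma~\ref{LM:redA}), an unstable ODE $\dot b_k = (n-2)b_k + Q_k$, and a parabolic evolution $\partial_t\check g = L_{\mr{stab}}\check g + Q_{\check g}$. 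The nonlinearities $Q_\ast$ are at least quadratic in $(a-1,b,\check g)$, and I would treat the three components in a specific order so that the improved bounds feed into one another.

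First I would establish~\eqref{eq:vecEpsi910}. Since $L_{\mr{stab}}$ is self-adjoint with spectral gap at least $4$, the semigroup $e^{tL_{\mr{stab}}}$ decays like $e^{-4t}$ in every Sobolev norm $H^m(\bS^n)$. Duhamel then gives
\[
\|\check g(t,T)\|_{H^{2n}} \lesssim e^{-4t}\|\check g(0,T)\|_{H^{2n}} + \int_0^t e^{-4(t-s)}\|Q_{\check g}(s)\|_{H^{2n}}\,\mr ds.
\]
I would bound the initial term by~\eqref{2n-derivatives} and the nonlinear source by a Moser product estimate on $H^{2n}$, using the bootstrap~\eqref{eq:bootstrap}, the prior~\eqref{eq:vecEpsi34}, and Lemma~\ref{LM:redA} to reduce $|a-1|$ to higher order. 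Closing the Duhamel integral then yields $\|\check g(t,T)\|_{H^{2n}} \leq c_1\delta_0 e^{-t}$.

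With this improved $\check g$ estimate in hand, I would prove~\eqref{five-one} by backward integration of the unstable ODE. The Sobolev embedding $H^{2n}\hookrightarrow L^\infty$ gives $\|\check g(t,T)\|_\infty \lesssim \delta_0 e^{-t}$, and combining with Lemma~\ref{LM:redA} produces $|Q_k(s)| \lesssim \delta_0^2 e^{-2s} + |b(s)|^2$. Because $b_k(T,T)=0$ by Lemma~\ref{LM:endPoint}, the unstable ODE integrates backward from $T$ to
\[
b_k(t,T) = -\int_t^T e^{(n-2)(t-s)}Q_k(s)\,\mr ds,
\]
and because $s\geq t$ forces $e^{(n-2)(t-s)}\leq 1$, a small Gronwall absorption of the $|b|^2$ contribution delivers $|b_k(t,T)|\leq c_1\delta_0^2 e^{-t}$ (in fact with faster decay, which the lemma does not record). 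The bound~\eqref{five-three} then follows by evaluating this estimate at $t=0$ and comparing with the admissibility expansion~\eqref{admissible}, which identifies $\vec\epsilon_T = \vec b(0,T) + \mc O(\|\vec\epsilon_T\|^2)$; a contraction refines~\eqref{eq:vecEpsi34} to $\|\vec\epsilon_T\|\lesssim \delta_0$, hence $\leq \delta_0^{9/10}$ for $\delta_0$ sufficiently small.

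The main obstacle is the interplay of prefactors: producing the $\delta_0^2$ factor in~\eqref{five-one} forces~\eqref{eq:vecEpsi910} to be proved \emph{first} at the sharp $\delta_0$ rate rather than the crude $\delta_0^{2/3}$ of the bootstrap, since otherwise the quadratic source $\|\check g\|_\infty^2$ would only contribute $\delta_0^{4/3}$, insufficient to close. The two structural ingredients that make this ordering work are the strict spectral gap of $L_{\mr{stab}}$ (controlling $\check g$ forward in time) and the terminal condition $b_k(T,T)=0$ characterizing the optimal diffeomorphism (controlling $b_k$ backward from $T$); together they let the forward-stable and backward-integrated components be estimated in compatible high-regularity norms, uniformly in $T\leq M_1$.
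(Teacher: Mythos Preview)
Your spectral decomposition and the backward integration from the terminal condition $b_k(T,T)=0$ are exactly the right mechanisms, but the order you propose is reversed relative to the paper, and this creates a genuine gap. You want to prove~\eqref{eq:vecEpsi910} first, bounding the nonlinear source $Q_{\check g}$ using ``the bootstrap~\eqref{eq:bootstrap}, the prior~\eqref{eq:vecEpsi34}, and Lemma~\ref{LM:redA}.'' But $Q_{\check g}$ contains quadratic contributions from the $b_k$ (for instance $b_jb_k\, x^jx^k\hat g$ and cross terms $b_k x^k\check g$, projected onto the stable subspace), and~\eqref{eq:vecEpsi34} bounds only $\vec\epsilon_T$, which via~\eqref{admissible} controls $b_k(0,T)$ --- not $b_k(t,T)$ for $t\in(0,T]$. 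Since the $b_k$ satisfy an \emph{unstable} ODE, no forward-in-time a priori bound on them is available from the stated hypotheses; without one, your Duhamel integral for $\check g$ cannot close. The paper therefore estimates $b_k$ \emph{first}, by backward integration from $T$ using only the bootstrap hypothesis~\eqref{eq:bootstrap} on $\check g$ together with a fixed-point argument to absorb the $|b_k|^2$ self-interaction; only afterwards does it run the energy estimate on $\check g$ with the $b_k$ bound in hand. Your worry that this first pass yields only a $\delta_0^{4/3}$ prefactor for $b_k$ is defensible, but that weaker bound is already enough to close~\eqref{eq:vecEpsi910} at the sharp rate $\delta_0 e^{-t}$, after which one may iterate once more on $b_k$ to upgrade to $\delta_0^2$.

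There is also a regularity issue in your $\check g$ step: $N_L(h)$ contains terms of schematic type $h*\nabla^2 h$, so a Moser product estimate on $\|Q_{\check g}\|_{H^{2n}}$ demands control of $\|h\|_{H^{2n+2}}$, one derivative beyond what the bootstrap~\eqref{eq:bootstrap} supplies. The paper circumvents this by working with the energy norms $\langle L^j\check g, L^j\check g\rangle$ and exploiting the quasilinear structure: splitting off a small multiple $\kappa$ of the dissipation $\langle L^j\check g,L^{j+1}\check g\rangle$ produces a negative term $-\kappa\sum_{|\alpha|\leq 2j+1}\|\nabla^\alpha\check g\|_2^2$ that, after an integration by parts, absorbs the top-order part of $\langle L^j\check g, L^jN_L(h)\rangle$. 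Your semigroup formulation could be repaired by invoking parabolic smoothing in the Duhamel kernel, but as written it does not close at the stated regularity.
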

Note that \eqref{GoodStart} implies that \eqref{eq:bootstrap} holds at $t=0$.
The lemma is proved in Section~\ref{sec:LMendPo}.

For our bootstrap argument to work, it is vital that estimate~\eqref{eq:vecEpsi910} improves
estimates~\eqref{eq:vecEpsi34} and \eqref{eq:bootstrap}.
Hence by iterating the argument above, we can conclude that Lemma~\ref{LM:endPo} holds for the maximal
time interval in which an optimal admissible diffeomorphism exists, which is $[0,M]$, where $M$ appears in Lemma~\ref{LM:endPoint}.
\smallskip

Recall that at a sequence of times $T\leq M$, we choose optimal diffeomorphisms $\phi_{T}$ and study the
Ricci--DeTurck flow~\eqref{eq:deTurck} with initial condition~\eqref{eq:initial-cond}.
Now we compare various $\phi_{T}^{*}g_0$ for $T\leq M$.
Our conclusion is that if $M=\infty$, then a subsequence of $\{\phi_{T_\gamma}^{*}g_0\}_{\gamma=0}^{\infty}$ is Cauchy.

\begin{lemma}\label{LM:gaugeCon}
For any $T_1,T_2\geq 0$ with $T_2\geq T_1$, one has
\[
\|\check{g}(t,T_2)-\check{g}(t,T_1)\|_{\infty}+\|\phi^{*}_{T_2}g_0-\phi^{*}_{T_1}g_0\|_{\infty}\ls \delta_0^2 e^{-T_1}.
\]
\end{lemma}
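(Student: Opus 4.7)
The plan is to form the difference $\Delta g(t) := g(t, T_2) - g(t, T_1)$, decompose it via the spectral picture from Lemma~\ref{LM:spectral} as $\Delta g = \Delta a\,\hat g + \sum_k \Delta b_k\, x^k \hat g + \Delta \check g$ with $\Delta \check g \perp \{\hat g, x^k\hat g\}$, and estimate each piece separately. Because both $g(\cdot, T_j)$ solve \eqref{eq:deTurck} with the same DeTurck background, and both lie within $c_1\delta_0 e^{-t}$ of $\hat g$ by Lemma~\ref{LM:endPo}, the difference satisfies a Duhamel-type equation $\partial_t \Delta g = L\,\Delta g + \mathcal N$, where $L$ is the operator from \eqref{Linear} and the nonlinear remainder $\mathcal N$ is pointwise bounded by a multiple of $\delta_0 e^{-t}\|\Delta g\|_\infty$. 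The redundant scalar $\Delta a$ is eliminated using Lemma~\ref{LM:redA}, so only $\Delta b_k$ and $\Delta\check g$ need to be tracked independently.

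For the stable part $\Delta\check g$, a forward Duhamel representation combined with the semigroup bound $\|e^{L_2 t}\|\ls e^{-4t}$, coming from the eigenvalue gap in Lemma~\ref{LM:spectral}, gives
\[
\|\Delta\check g(t)\|_\infty \ls e^{-4t}\|\Delta\check g(0)\|_\infty + \int_0^t e^{-4(t-s)}\|\mathcal N(s)\|_\infty\,ds,
\]
where $\Delta\check g(0)$ is the stable component of $\phi_{T_2}^*g_0 - \phi_{T_1}^*g_0$, bounded by $\|\vec\epsilon_{T_2}-\vec\epsilon_{T_1}\|$ via the admissibility expansion \eqref{admissible}. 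The hard component is the unstable conformal direction $\Delta b_k$, whose linearized evolution is $\dot{\Delta b_k} = (n-2)\Delta b_k + \text{coupling}$; forward integration would lose factors of $e^{(n-2)t}$. Here I would exploit the endpoint conditions $b_k(T_j, T_j) = 0$ built into the optimal diffeomorphisms of Lemma~\ref{LM:endPoint}: at $t = T_1$ one has $\Delta b_k(T_1) = b_k(T_1, T_2)$, which is controlled by $c_1 e^{-T_1}\delta_0^2$ via \eqref{five-one}. Integrating the unstable ODE \emph{backward} from $T_1$ is contractive, yielding
\[
|\Delta b_k(t)| \ls e^{-(n-2)(T_1-t)}\,e^{-T_1}\delta_0^2 + \int_t^{T_1}e^{-(n-2)(s-t)}\|\mathcal N(s)\|_\infty\,ds,
\]
which is exactly the source of the $\delta_0^2 e^{-T_1}$ rate in the statement.

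To close the proof, I would set up a bootstrap on the ansatz $\|\Delta g(t)\|_\infty \leq C\delta_0^2 e^{-T_1}$ on $[0, T_1]$: the two linear estimates above combine, the nonlinear forcing $\|\mathcal N\|_\infty \ls \delta_0 e^{-t}\|\Delta g\|_\infty$ is absorbed thanks to the extra factor of $\delta_0$, and the initial-time data $\Delta b_k(0)$ and $\Delta\check g(0)$ are in turn controlled by $\|\vec\epsilon_{T_2}-\vec\epsilon_{T_1}\|$, which \eqref{admissible} reads off from $\Delta b_k(0)$; this produces the second term of the desired estimate. The main obstacle is sewing together the forward-stable estimate for $\Delta\check g$ with the backward-stable estimate for $\Delta b_k$ through the nonlinear coupling without losing a power of $\delta_0$ or picking up unwanted powers of $e^{T_1}$; the separation of scales $\delta_0 \ll 1$, together with the two-sided boundary information $b_k(0) = O(\delta_0)$ and $b_k(T_j,T_j) = 0$, is what makes the scheme close.
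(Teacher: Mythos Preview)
Your approach is correct and essentially the same as the paper's: both form the difference $\Psi=g(\cdot,T_2)-g(\cdot,T_1)$, linearize to get $\partial_t\Psi=L\Psi+R_L(\Psi)$ with a remainder that is $O(\delta_0 e^{-t})\cdot\Psi$, decompose $\Psi$ spectrally via Lemma~\ref{LM:spectral}, and then exploit the endpoint identity $\beta_k(T_1)=b_k(T_1,T_2)\lesssim\delta_0^2e^{-T_1}$ from Lemma~\ref{LM:endPo} together with the exponential growth of the unstable conformal modes to propagate the bound back to $t=0$, where $\beta_k(0)\approx\epsilon_k(T_1)-\epsilon_k(T_2)$ by the admissibility expansion~\eqref{admissible}. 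The paper phrases the unstable-mode step as a forward growth estimate $\beta_k(t)\approx e^{(n-2)t}\beta_k(0)$ that is then inverted, while you phrase it as a backward-in-time integration from $T_1$; these are equivalent, and your explicit forward/backward Duhamel split for the stable and unstable parts is just a more systematic rendering of what the paper calls ``standard arguments.''
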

The lemma is proved in Section~\ref{sec:LemGauge}.
\smallskip

Next we show that the results above, specifically Lemmas~\ref{LM:endPoint}--\ref{LM:gaugeCon}, hold for $t\in [0,\infty)$.
This follows from our next result, which implies that the maximal value of $M$ is $\infty$.

\begin{lemma}	\label{LM:Minfty}
If there exists a constant $M>0$, such that for any $T\in [0,M]$, there exists an optimal admissible
diffeomorphism $\phi_{T}=\phi(\vec\epsilon_{T}):\ \mathbb{S}^n\rightarrow \mathbb{S}^n$, then there exists a
constant $\delta=\delta(M)>0$ such that for each $T_{1}\in [M,\ M+\delta],$ there exists a unique
$\vec\epsilon_{T_1}\in \mathbb{R}^{n+1}$ such that $\phi(\vec\epsilon_{T_1})$ is an optimal admissible diffeomorphism, and $\check{g}(t,T_1)$ satisfies estimate \eqref{eq:bootstrap}.
\end{lemma}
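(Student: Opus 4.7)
The plan is an implicit-function-theorem argument that extends the existence of optimal admissible diffeomorphisms from $[0, M]$ to $[0, M + \delta]$. For fixed background $\hat g$ and each small $\vec\epsilon \in \mathbb R^{n+1}$, let $g(\,\cdot\,; \vec\epsilon)$ denote the solution of \eqref{eq:deTurck} with initial data $\phi(\vec\epsilon)^* g_0$, decomposed as
\begin{equation*}
g(t; \vec\epsilon) = \Bigl\{a(t; \vec\epsilon) + \sum_k b_k(t; \vec\epsilon)\, x^k\Bigr\} \hat g + \check g(t; \vec\epsilon),
\end{equation*}
with $\check g(\,\cdot\,; \vec\epsilon) \perp \hat g, x^k \hat g$, as in \eqref{eq:endPo}. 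Define the finite-dimensional map
\begin{equation*}
F(\vec\epsilon, T_1) := \bigl(b_1(T_1; \vec\epsilon), \dots, b_{n+1}(T_1; \vec\epsilon)\bigr) \in \mathbb R^{n+1},
\end{equation*}
so that optimality of $\phi(\vec\epsilon)$ at time $T_1$ in the sense of Lemma~\ref{LM:endPoint} is precisely $F(\vec\epsilon, T_1) = 0$. By hypothesis $F(\vec\epsilon_M, M) = 0$, and Lemma~\ref{LM:endPo} applied at $T = M$ supplies the strict interior bounds $\|\vec\epsilon_M\| \leq \delta_0^{9/10}$ and $\sum_{|\alpha| \leq 2n} \|\nabla^\alpha \check g(t; \vec\epsilon_M)\|_2 \leq c_1 e^{-t} \delta_0$ on $[0, M]$.

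The next step is to verify the hypotheses of the implicit function theorem at $(\vec\epsilon_M, M)$. Standard parabolic regularity for the strictly parabolic system \eqref{eq:deTurck}, combined with the smooth dependence of $\phi(\vec\epsilon)^* g_0$ on $\vec\epsilon$ implicit in the expansion \eqref{admissible}, shows that $F$ is $C^1$ in a neighborhood of $(\vec\epsilon_M, M)$. For the invertibility of $D_{\vec\epsilon} F |_{(\vec\epsilon_M, M)}$, I would linearize: the initial perturbation $\partial_{\epsilon_j} \phi(\vec\epsilon)^* g_0 \big|_{\vec\epsilon_M}$ contains, by \eqref{admissible}, the leading conformal piece $x^j \hat g$ plus corrections of order $\|\vec\epsilon_M\|$. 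Under the linearized \textsc{nrdf} $\partial_t h = L h$ from \eqref{Linear}, the modes $x^k \hat g$ span the $(n+1)$-dimensional unstable eigenspace of $-L$ identified in Lemma~\ref{LM:spectral} with eigenvalue $2 - n$, so they evolve to $e^{(n-2) T_1} x^k \hat g$ to leading order and remain orthogonal to the strictly stable sector. Projecting onto $x^k \hat g$ at $T_1 = M$ then yields a matrix of the form $e^{(n-2) M}(\mathrm{Id} + o(1))$, which is invertible for $\delta_0$ sufficiently small. The implicit function theorem supplies $\delta = \delta(M) > 0$ and a unique continuous curve $T_1 \mapsto \vec\epsilon_{T_1}$ on $[M, M + \delta]$ solving $F(\vec\epsilon_{T_1}, T_1) = 0$.

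It remains to verify that the bootstrap estimate \eqref{eq:bootstrap} is inherited by $\check g(t; \vec\epsilon_{T_1})$ for $T_1 \in [M, M + \delta]$. At $T_1 = M$, bound \eqref{eq:vecEpsi910} is strictly stronger than \eqref{eq:bootstrap} whenever $\delta_0$ is small, and by continuity of $(\vec\epsilon, T_1) \mapsto \check g(\,\cdot\,; \vec\epsilon)$ on $[0, T_1]$ the strict inequality persists after possibly shrinking $\delta$. The main obstacle in executing the plan is the invertibility step: the picture is transparent when linearizing at $(\vec\epsilon, \check g) = (0, 0)$, but at $(\vec\epsilon_M, M)$ one must show that the nonlinear corrections to the initial perturbation and the coupling with the stable sector induced by $\check g(\,\cdot\,; \vec\epsilon_M)$ remain uniformly subleading. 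This is controlled by the smallness bounds $\|\vec\epsilon_M\| \leq \delta_0^{9/10}$ from \eqref{five-three} and $\|\check g(t; \vec\epsilon_M)\|_\infty \lesssim e^{-t} \delta_0$ from \eqref{eq:vecEpsi910} via Sobolev embedding with $2n$ derivatives, both of which go to zero as $\delta_0 \to 0$.
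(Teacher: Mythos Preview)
Your proposal is correct and follows essentially the same approach as the paper: both use the implicit function theorem to extend the curve $T\mapsto\vec\epsilon_T$ past $T=M$, relying on the estimates of Lemma~\ref{LM:endPo} at $T=M$ together with local well-posedness of \textsc{nrdf}. The paper's version is terser---it simply observes that since $b_k(M,M)=0$ and $g(t,M)$ continues past $M$ with $\sum_k|b_k(t,M)|$ as small as one likes, the situation reduces to that already handled in Lemma~\ref{LM:endPoint}---whereas you spell out the invertibility of $D_{\vec\epsilon}F$ explicitly via the spectral picture of $L$ from Lemma~\ref{LM:spectral}.
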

The lemma is proved in Section \ref{sec:LMMinfty}.
\smallskip

Now we are ready to prove the main result of this section.
\begin{proof}[Proof of Theorem~\ref{THM:conver}]
The fact that the maximal value of $M$ is $\infty$ enables us to use Lemma~\ref{LM:gaugeCon}
to see that the sequences $\{\phi_{T_\gamma}^{*}g_0\}_{\gamma=0}^{\infty}$ and
$\{g(t,T_\gamma)\}_{\gamma=0}^{\infty}$, for any fixed $t$, are Cauchy. 
We denote their limits by $g(0,\infty)$ and $g(t,\infty)$, respectively.
By the results stated above, it is not hard to see that
\begin{align}
\|g(t,\infty)-\hat{g}\|_{\infty}\leq \|g(t,\infty)-g(t,T)\|_{\infty}+\|g(t,T)-\hat{g}\|\lesssim \delta_0 e^{-T}.
\end{align}
By well-posedness of \textsc{nrdf}, $g(t,\infty)$ is the solution of
\[
\Dt g(t,\infty)=-2\Rc[g(t,\infty)]+\mc L_{\hat{W}} [g(t,\infty)]+\frac{2}{n}\bar{R}[g(t,\infty)]\,g(t,\infty).
\]

Finally, we prove that there exists $\vec\epsilon_{\infty}\in \mathbb{R}^{n+1}$ such that
$g(0,\infty)=\phi^{*}(\vec\epsilon_{\infty})g_0$.
We proved that the vectors $\{\vec\epsilon_{T_\gamma}\}_{\gamma=0}^{\infty}$, associated with
$\phi_{T_\gamma}$, are uniformly bounded.
Hence there exists a Cauchy subsequence, again denoted by $\{\vec\epsilon_{T_\gamma}\}_{\gamma=1}^{\infty}$,
such that $\lim_{\gamma\rightarrow \infty}T_\gamma=\infty$ and
$\lim_{\gamma\rightarrow \infty}\vec\epsilon_{T_\gamma}=\vec\epsilon_{\infty}$.
This together with the fact that $\{\phi_{T_\gamma}^{*}g_0\}_{T=0}^{\infty}$ is Cauchy implies the
desired result, which is
\[
\phi_{\infty}^{*}(\vec\epsilon_{\infty})g_0=\lim_{\gamma\rightarrow \infty}\phi_{T_\gamma}^{*}g_0=g(0,\infty).
\]
\end{proof}

\section{Existence and uniqueness of admissible diffeomorphisms}	\label{sec:LMendP}
\begin{proof}[Proof of  Lemma~\ref{LM:endPoint}]
We first show there is a unique admissible diffeomorphism at the initial time. 
That is, we seek a unique $\phi_0$ such that
\[
\phi_0^{*}g_0=a(0,0)\hat{g}+\check{g}(0,0),
\]
with $\check{g}(0,0)\perp \big\{\hat{g},\ x^{k}\hat{g}\big\}$ for $k=1,\dots,n+1$. 

For this purpose, we recall that $g_0$ is of the form 
\begin{align}
g_0=\big\{a(0)+\sum_{k=1}^{n+1}b_{k}(0)x^{k}\big\}\hat{g}+\check{g}(0),
\end{align}
with $|1-a(0)|+\sum_{k}|b_k(0)|\ll 1$ and $\check{g}(0)\perp \hat{g},\ x^{k}\hat{g}$.
As observed in Section~\ref{MainConstruction}, for any vector $\vec\epsilon\in \mathbb{R}^{n+1}$
sufficiently small, there is an admissible diffeomorphism satisfying
\[
\phi^{*}(\vec{\epsilon}) g_0= a(0)\hat{g}
+\sum_{k=1}^{n+1}\big\{a(0)\epsilon_k+b_k\big\}x^{k}\hat{g}
+\phi^{*}\check{g}(0)+\mc O\big(\|\vec\epsilon\|^2+\|\vec\epsilon\| \sum_{k} |b_k|\big).
\]
So we may apply the implicit function theorem to choose a unique $\vec\epsilon\in \mathbb{R}^{n+1}$ that makes
$\phi^{*}(\vec\epsilon)g_0\perp x^{k}\hat{g}$ for $k=1,\dots, n+1$, which yields condition~\eqref{eq:endPo}.

Now we show that there is a unique admissible diffeomorphism $\phi_{T}$ for each sufficiently small time $T.$
To do so, we rewrite~\eqref{eq:deTurck}--\eqref{eq:initial-cond} in the form
\begin{align*}
\Dt g(t,T)&=-2\Rc[g(t,T)]+\mc L_{\hat{W}} [g(t,T)]+\frac{2}{n}\bar{R}[g(t,T)]\,g(t,T),\\
g(0,T)&=\phi^{*}(\vec\epsilon) g_0=\phi_{0}^{*}g_0+\big\{\phi^{*}(\vec\epsilon)g_0-\phi_{0}^{*}g_0\big\}.
\end{align*}
By local well-posedness, we have that, in the interval $t\in [0,T]$,
\[
g(t,T)=g(t,0)+\big[\phi^{*}(\vec\epsilon)g_0-\phi_{0}^{*}g_0\big]+\rho(t)
\]
where $g(t,0)$ is the solution of the Cauchy problem
\begin{align*}
\Dt g(t,0)&=-2\Rc[g(t,0)]+\mc L_{\hat{W}} [g(t,0)]+\frac{2}{n}\bar{R}[g(t,0)]\,g(t,0),\\
g(0,0)&=\phi^*_{0} g_0,
\end{align*}
and where the remainder term $\rho(t)$ satisfies the estimate
\[
\sum_{|\alpha|=0,1}\|\nabla^\alpha \rho(t)\|_{\infty}\ls
Ct \sum_{|\alpha|=0,1,2,3} \|\nabla^{\alpha} [\phi^{*}(\vec\epsilon)g_0-\phi_{0}^{*}g_0]\|_{\infty}.
\]
Hence to make $g(T,T)$ be of the desired form,
\[
g(T,T)=a(T)\hat{g}+\check{g}(T,T),
\]
with $\check{g}\perp \hat{g},\ x^{k}\hat{g}$, we again apply the implicit function theorem.
\end{proof}

\section{Estimates for the diffeomorphisms}	\label{sec:LMendPo}
\begin{proof}[Proof of Lemma~\ref{LM:endPo}]
To derive estimates for the components $b_{k},\ k=1,\dots,n+1$, and $\check{g}$, we begin by linearizing the equation
for $g(t,T)$ around $\hat{g}$. Define $h(t,T)$ by
\[
g(t,T)=\hat{g}+h(t,T).
\]
By the evolution equation~\eqref{eq:deTurck} satisfied by $g(t,T)$, one sees that
\[
\Dt h(t,T)=L[h(t,T)]+N_L[h(t,T)],
\]
where $L$ is the linearized operator defined in~\eqref{Linear} and analyzed in Lemma~\ref{LM:spectral},
and $N_L[h(t,T)]$ is at least second-order in $h$. We note that the quadratic terms in $N_L[h(t,T)]$ are
contractions of the form $h*h$, $h*\nabla h$, $\nabla h*\nabla h$, and $h*\nabla\nabla h$. (For explicit
formulas, see Lemma~29 of \cite{AK07}.)

We decompose $h(t,T)$ according to the spectrum of $L$, writing
\begin{equation}	\label{def:htT}
h(t,T):=\left\{[a(t,T)-1]+\sum_{k=1}^{n+1}b_k(t,T)x^{k}\right\}\hat{g}+\check{g}(t,T).
\end{equation}
Below, we control the various components of $h(t,T)$ in~\eqref{def:htT}.

It is easy to handle the component $a(t,T)-1$ of $h(t,T)$ in~\eqref{def:htT}.
Indeed, in Lemma~\ref{LM:redA}, we observed that it can be controlled by estimate~\eqref{eq:meas},
namely
\[
|a(t,T)-1|=\mc O\big(|b_{k}(t,T)|^2+\|\check{g}\|_{\infty}^2\big).
\]
So the desired estimate for $|a(t,T)-1|$ follows from those derived below for the remaining components.

Now we turn to the terms $b_{k}(t,T)$, which are given by\footnote{Recall that the $L^2$ inner product
$\langle \cdot,\cdot\rangle$ is computed with respect to the standard metric $\sigma$ induced on the unit sphere
$\mb S^n\subset\mb R^{n+1}$.}
\[
b_k(t,T)=\frac{\langle x^{k}\hat{g},\  h(t,T)\rangle}{\langle x^{k} \hat g,\ x^{k}\hat{g}\rangle}.
\]
At $s\in[t,T]$, we apply the inner product $\langle x^{k}\hat{g},\ \cdot \rangle$ to the evolution equation~\eqref{eq:deTurck}, obtaining
\[
\dot{b}_k(s,T)\langle x^{k}\hat{g},\ x^{k}\hat{g}\rangle=(n-2) b_{k}(s,T)\langle x^{k}\hat{g},\ x^{k}\hat{g}\rangle
+\langle x^{k}\hat{g},\ N_L[h(s,T)]\rangle
\]
where the term $\langle x^{k}\hat{g}, N_L[h(s,T)]\rangle$ satisfies the estimate
\begin{equation}	\label{LoseADerivative}
|\langle x^{k}\hat{g},N_L[h(s,T)]\rangle | \ls \sum_{k} |b_k(s,T)|^2 +\sum_{|\alpha|=0,1}\|\nabla^{\alpha} \check{g}(s,T)\|_{2}^2.
\end{equation}
Observe that the right-hand side of~\eqref{LoseADerivative} does not depend on $\nabla^{\alpha}\check{g}$ for $|\alpha|=2$,
even though $N_L[h]$ does. This is because when computing the inner product, we integrate by parts to move one derivative off $h$.
Using~\eqref{LoseADerivative}  together with the fact that $b_{k}(T,T)=0$, we see that for any time $t\in [0,T]$, we have
\[
b_{k}(t,T)=-\int_{t}^{T}e^{-(n-2)(s-t)}\frac{\langle x^{k}\hat{g},N_L[h(s,T)]\rangle}{\langle x^{k} \hat g,\ x^{k}\hat{g}\rangle}\,\mr ds.
\]
Consequently, we have
\begin{align}
|b_{k}(t,T)|\ls
\int_{t}^{T} e^{-(n-2)(s-t)}\left\{\sum_{k=1}^{n+1} |b_k(s,T)|^2
+\sum_{|\alpha|=0,1} \|\nabla^{\alpha}\check{g}(s,T)\|_2^2\,\|\check{g}(s,T)\|_{\infty}\right\}\mr ds.\label{eq:bktT}
\end{align} 
Next we use assumption~\eqref{eq:bootstrap} in Lemma \ref{LM:endPo}, which is
$\sum_{|\alpha|\leq 2n} \|\nabla^{\alpha}\check{g}\|_{2}\leq \delta_{0}^{\frac{2}{3}}e^{-\frac{2}{3}t}$,
and Sobolev embedding, $\|\check{g}\|_{\infty}\ls \sum_{|\alpha|\leq n} \|\nabla^{\alpha}\check{g}\|_{2}$,
to obtain
\[
|b_{k}(t,T)|\ls
\int_{t}^{T} e^{-(n-2)(s-t)}\left\{\sum_{k=1}^{n+1} |b_k(s,T)|^2
+\delta_0^2 e^{-2s}\right\}\mr ds.
\]
Recall that $n-2\geq 1.$
By a standard fixed point argument we find there is $c>0$ such that
\begin{align}
|b_{k}(t,T)|\leq c e^{-t} \delta_0^2.\label{eq:bktT2},
\end{align}
which gives estimate~\eqref{five-one}.

To prove estimate~\eqref{eq:vecEpsi910}, we analyze the component $\check{g}$ of $h(t,T)$ in \eqref{def:htT}.
By Lemma~\ref{LM:spectral} and \eqref{eq:bktT2} above, one has
\begin{align*}
\frac{d}{dt} \langle \check{g},\check{g}\rangle
&=2\langle \check{g},L\check{g}\rangle+2\langle \check{g},N_L[h]\rangle\\
&\leq -8 \langle \check{g},\check{g}\rangle
+\mc O\left(\sum_{k=1}^{n+1}|b_{k}|^2+\sum_{|\alpha|=0,1}\|\nabla^{\alpha} \check{g}\|_2^3\right)\\
&\leq -8 \langle \check{g},\check{g}\rangle
+\mc O\left(
\delta_0^4 e^{-2t }+\sum_{|\alpha|=0,1}\|\nabla^{\alpha} \check{g}\|_2^3\right)
\end{align*}
Therefore, 
\[
\frac{d}{dt}\langle \check{g},\check{g}\rangle\leq e^{-8t} \langle \check{g}(0),\check{g}(0)\rangle+\int_{0}^{t} e^{-8(t-s)}
\mc O\left(\delta_0^4 e^{-2s }+\sum_{|\alpha|=0,1}\|\nabla^{\alpha} \check{g}\|_2^3\right)\mr ds.
\]

In similar fashion, one computes that, for any $j\leq n,$
\begin{align}
\frac{d}{dt} \langle L^{j} \check{g},\ L^{j} \check{g}\rangle=2\langle L^{j} \check{g},\ L^{j+1} \check{g}\rangle+2\langle L^{j} \check{g},\ L^{j}N_{L}(h)\rangle.\label{target}
\end{align}
For the first term on the right-hand side, we claim that for a $\kappa>0$ to be chosen below, one has
\begin{equation}	\label{firstT}
2\langle L^{j} \check{g},\ L^{j+1} \check{g}\rangle\leq
-\kappa \sum_{|l|\leq 2j+1}\|\nabla^{l}\check{g}\|_{2}^2-3\langle L^{j}\check{g},\ L^{j}\check{g}\rangle.
\end{equation}
To see this, we decompose $2\langle L^{j} \check{g},\ L^{j+1} \check{g}\rangle$ into two terms,
\[
2\langle L^{j} \check{g},\ L^{j+1} \check{g}\rangle=
2\kappa \langle L^{j} \check{g},\ L^{j+1} \check{g}\rangle+2(1-\kappa)\langle L^{j} \check{g},\ L^{j+1} \check{g}\rangle.
\]
For the first term, we use the derivatives in $L^{j}$ to get $C_1,\ C_2>0$ such that
\[
2\kappa \langle L^{j} \check{g},\ L^{j+1} \check{g}\rangle\leq
-C_1\kappa \sum_{|\alpha|\leq 2j+1}\|\nabla^{\alpha}\check{g}\|_{2}^2
+C_2\kappa \|\check{g}\|_2^2.
\]
This, together with the estimate for the second term
\[
\langle L^{j} \check{g},\ L^{j+1} \check{g}\rangle\leq -4\langle L^{j} \check{g},\ L^{j}\check{g}\rangle
\]
implies the claimed estimate \eqref{firstT}.

For the last term on the right-hand side of~\eqref{target}, 
we have to be careful about the number of derivatives on $\check{g}$.
Integrating by parts and using estimate~\eqref{eq:bktT2}, we obtain
\begin{align}
|\langle L^{j} \check{g},\ L^{j}N_{L}(h)\rangle|&\ls
\sum_{k=1}^{n+1}|b_k|^2+\sum_{|\alpha|\leq 2j+1}\|\nabla^{\alpha}\check{g}\|_{2}^2
\left\{\sum_{|\beta|\leq j} \|\nabla^{\beta}\check{g}\|_{\infty}+\sum_{k=1}^{n+1}|b_k|\right\}\nonumber\\
&\ls\delta_0^{4}e^{-2t}+\sum_{|\alpha|\leq 2j+1}\|\nabla^{\alpha}\check{g}\|_{2}^2
\left\{\sum_{|\beta|\leq j} \|\nabla^{\beta}\check{g}\|_{\infty}+\delta_0^2 e^{-t}\right\}.
\end{align}
To control $\sum_{|\beta|\leq j} \|\nabla^{\beta}\check{g}\|_{\infty}$ for $j\leq n$,
we use Sobolev embedding along with assumption~\eqref{eq:bootstrap} in Lemma \ref{LM:endPo} to obtain
\begin{align}
\sum_{|\beta|\leq n} \|\nabla^{\beta}\check{g}\|_{\infty}\ls\sum_{|\beta|\leq 2n} \|\nabla^{\beta}\check{g}\|_{2}
\leq \delta_{0}^{\frac{2}{3}}\ll 1.
\end{align}
Combining this with \eqref{firstT}, and choosing $\kappa\gg\delta_0^{\frac{2}{3}}$, one gets
\[
\frac{d}{dt}\sum_{j=1}^{n}\langle L^{j}\check{g},\ L^{j}\check{g}\rangle \leq 
-6\sum_{j=1}^{n}\langle L^{j}\check{g},\ L^{j}\check{g}\rangle
+\mc O\left(\delta_0^{4}e^{-2t}\right).
\]
This estimate immediately implies that
\begin{multline}	\label{eq:bootG}
\sum_{j=1}^{n}\langle L^{j}\check{g}(t,T),\ L^{j}\check{g}(t,T)\rangle\leq
e^{-6t} \sum_{j=1}^{n}\langle L^{j}\check{g}(0,T),\ L^{j}\check{g}(0,T)\rangle\\
+\int_{0}^{t} e^{-6(t-s)} \mc O\left(\delta_0^{4}e^{-2s}\right)\,\mr ds.
\end{multline}
By estimate~\eqref{2n-derivatives} for $\check{g}(0,T)$, we find there exists $c>0$ such that
\[
\sum_{|\alpha|\leq 2n}\|\nabla^{\alpha}\check{g}(t,T)\|_2\ls
\sum_{j=1}^{n}\langle L^{j}\check{g}(t,T),\ L^{j}\check{g}(t,T)\rangle
\leq c\delta_0^2 e^{-2t},
\]
which yields estimate~\eqref{eq:vecEpsi910}.

Finally, we prove the last estimate in Lemma \ref{LM:endPo}.
By the construction of $b_{k}(0,T)$ and our estimates above for $b_{k}(0,T)$ and $b_k(0,0)$, we observe that
\[
\vec\epsilon_{T}=\mc O \left(\sum_{k}|b_{k}(0,T)+b_{k}(0,0)|\right)=\mc O (\delta_0),
\]
which yields~\eqref{five-three}.
\end{proof}

\section{Existence of a convergent subsequence}	\label{sec:LemGauge}
\begin{proof}[Proof of Lemma~\ref{LM:gaugeCon}]
The basic idea is to compare solutions $g(t,T_2)$ and $g(t,T_1)$ of the DeTurck-Ricci flow~\eqref{eq:deTurck}
in the time interval $[0,\ T_1].$

Recall that at time $t=T_1$, $g_{T_2}$ and $g_{T_1}$ are of the form
\begin{align}
g(T_1,T_2)=&(a(T_1,T_2)+\sum_{k=1}^{n+1}b(T_1,T_2)x^{k})\hat{g}+\check{g}(T_1,T_2),\\
g(T_1,T_1)=&a(T_1,T_1)\hat{g}+\check{g}(T_1,T_1),
\end{align}
where $\check{g}(T_1,T_1)$ and $\check{g}(T_1,T_2)$ satisfy appropriate orthogonality conditions.

We begin by deriving an evolution equation for 
\[
\Psi(t):=g(t,T_2)-g(t,T_1)
\]
from those for $g(t,T_2)$ and $g(t,T_1)$ given in \eqref{eq:deTurck}, obtaining
\begin{subequations}
\begin{align}
\Dt \Psi&=L\Psi+R_L(\Psi),	\label{eq:eqPsi}\\
\Psi(0)&=g(0,T_2)-g(0,T_1)=\phi^{*}_{T_2}g_0-\phi^{*}_{T_1}g_0,
\end{align}
\end{subequations}
where $L$ is the linear operator in~\eqref{GeneralLinear},
\[
L\Psi=\Delta_{\ell}\Psi+2(n-1) \Big\{\Psi-\frac{1}{n} \bar{H}_{\Psi}\hat{g}\Big\},
\]
and $R_L$ denotes the nonlinear ``remainder term.''
In the time interval $[0,\ T_1]$, the remainder term satisfies
\begin{multline}	\label{eq:coRemainder}
\|R_L(\Psi)\|_{\infty}\ls \sum_{|\alpha|=0,1} \|\nabla^{\alpha} \Psi\|_{\infty}\\
*\left\{\sum_{|\beta|=0,1,2} \|\nabla^{\beta} (g(t,T_1)-\hat{g})\|_{\infty}
+\sum_{|\beta|=0,1,2} \|\nabla^{\beta} (g(t,T_2)-\hat{g})\|_{\infty}\right\}.
\end{multline}

Wishing to exploit the spectrum of $L$ using Lemma~\ref{LM:spectral},
we decompose $\Psi$ as
\begin{equation}	\label{eq:decomPsi}
\Psi(t)=\Big\{\delta(t)+\sum_{k=1}^{n+1}\beta_k(t) x^{k}\Big\}\hat{g}+\eta(t),
\end{equation}
with 
\[
\eta\perp \hat{g},\,x^{k}\hat{g}\qquad\mbox{for}\quad k=1,\dots,n+1.
\]
To control the various components of $\Psi$ in~\eqref{eq:decomPsi}, the
key steps are to prove that the components corresponding to $\beta_{k}(t)$ dominate, and
that their evolutions are almost linear. In what follows, we outline the proof, omitting some
details that are easily established by standard methods.

To begin, we recall that $\phi_{T_{m}}=\phi(\vec\epsilon_{T_m})$ for some
$\vec\epsilon_{T_m}\in \mathbb{R}^{n+1}\ (m=1,2)$. Then we observe that 
\begin{align*}
\Psi(0)&=\phi_{T_2}^{*}g_0-\phi_{T_1}^{*}g_0\\
&= a_0\big\{\phi_{T_2}^{*}\hat{g}-\phi_{T_1}^{*}\hat{g}\big\}
	+\sum_{k=1}^{n+1}b_{k}(0)\big\{\phi_{T_2}^{*}x^{k}\hat{g}-\phi_{T_1}^{*}x^{k}\hat{g}\big\}
	+\phi_{T_2}^{*}\check{g}-\phi_{T_1}^{*}\check{g}\\
&=\Big\{\omega +\sum_{k=1}^{n+1} \big[\epsilon_k(T_1)-\epsilon_{k}(T_2)+\mu_k\big]\Big\}x^{k}\hat{g}+\Phi.
\end{align*}
Here $\omega$, $\mu_k\in \mb R$, and $\Phi$ satisfy the estimates
\[
|\omega|+\sum_{k=1}^{n+1}|\mu_k|+\|\Phi\|_{2}\ls \|\vec\epsilon(T_1)-\vec\epsilon(T_2)\|_{2}\,\delta_0,
\]
where $\delta_0\in \mathbb{R}^{+}$ is defined in \eqref{def:delta01} as
\[
\delta_0:=|1-a(0)|+\sum_{k=1}^{n+1}|b_k(0)|+\sum_{|\alpha|=0}^{2n} \|\nabla^{\alpha} \check{g}(0)\|_2.
\]
We recall too that $g_0$ decomposes as
\[
g_0=\big\{a(0)+\sum_{k=1}^{n+1} b_{k}(0)x^{k}\big\}\hat{g}+\check{g}(0).
\]
These observations show that the initial value $\Psi(0)$ is dominated
by its components in the directions $x^{k}\hat{g}$. That is,
\[
\Psi(0)\approx \sum_{k=1}^{n+1} \big\{\epsilon_k(T_1)-\epsilon_{k}(T_2)\big\} x^{k}\hat{g}.
\]
Comparing this to the decomposition of $\Psi(t)$ in \eqref{eq:decomPsi}, one sees that
\[
(\beta_1(0),\dots,\beta_{n+1}(0))\approx \vec\epsilon_k(T_1)-\vec\epsilon_{k}(T_2).
\]

For any later time $t\in [0,T_1],$ we claim that the terms $\sum_{k=1}^{n+1}\beta_{k}x^{k}\hat{g}$
continue to dominate $\Psi(t)$. The claim follows easily from two facts. \textsc{(i)} The smallness
of $R_L(\Psi)$ in estimte~\eqref{eq:coRemainder} means that equation~\eqref{eq:eqPsi} is dominated
by its linear term. And \textsc{(ii)} the spectral decomposition of  $L$ in Lemma \ref{LM:spectral} shows
that the fastest growing eigenspace is spanned by $\beta_k, \ k=1,2,\dots, n+1$, with eigenvalue $n-2\geq1$.
All other eigenvalues are non-positive. By standard arguments, these facts imply that in the time interval $t\in [0,\ T_1],$
one has
\[
(\beta_1(t),\ \beta_{2}(t),\ \dots,\ \beta_{n+1}(t))\approx e^{(n-2)t} (\beta_1(0),\ \beta_{2}(0),\ \dots,\ \beta_{n+1}(0)).
\]
Combining this with the estimates for $b_k(t, T_1)$ and $b_{k}(t,T_2)$ proved in Lemma~\ref{LM:endPo} gives
the estimate in the statement of Lemma~\ref{LM:gaugeCon}.
\end{proof}

\section{Long-time existence}	\label{sec:LMMinfty}
\begin{proof}[Proof of Lemma~\ref{LM:Minfty}]
To extend the existence of optimal admissible diffeomorphisms after $M<\infty$, we use our estimates
for the solution $g(t,M)$ of \textsc{nrdf} with initial condition $\phi^{*}(\vec{\epsilon}_{M})g_0$ to find
an $\vec{\epsilon}_{M+\delta}\in\mb R^{n+1}$ determining an optimal admissible diffeomorphism at
time $M+\delta$, for some $\delta>0$ sufficiently small.

As in equation~\eqref{eq:endPo}, we decompose the solution $g(t,M)$ as
\[
g(t,M)=\Big\{a(t,M)+\sum_{k} b_k(t,M) x^k\Big\}\hat{g}+\check{g}(t,M),
\]
with 
\[
\check{g}(t,M)\perp \hat{g},x^{k}\hat{g}.
\]
The estimates in Lemma~\ref{LM:endPo} for $g(M,M)$ show that $g(t,M)$ can be continued
past $M$, at least for a short time. So by the consequence of Lemma~\ref{LM:endPoint}
that all $b_k(M,M)=0$, and local well-posedness of \textsc{nrdf}, we can
choose $\delta>0$ sufficiently small such that in the time interval $[M, \ M+\delta]$, the quantities
$\sum_{k}|b_k(t,M)|$ $(k=1,\dots,n+1)$ and $\|\nabla^{\alpha}\big[\hat g(t,M)-\hat g(M,M)\big]\|_2$
$(|\alpha|\leq2n)$ are as small as one likes. Because of this, the construction proceeds exactly
as in the proof of Lemma~\ref{LM:endPoint}. Hence we omit the details.
\end{proof}

\end{document}